%
%
%

\documentclass[graybox]{svmult}

\usepackage{framed}
\usepackage{mathptmx}       
\usepackage{helvet}         
\usepackage{courier}        
\usepackage{type1cm}        
%
\usepackage{makeidx}         
\usepackage{graphicx}        
\usepackage{multicol}        
\usepackage[bottom]{footmisc}
\usepackage{subfigure}
\usepackage{amsmath, amssymb}
\usepackage[algo2e,linesnumbered,ruled]{algorithm2e}
\usepackage{cite}

\newtheorem{assumption}[theorem]{Assumption}

\newtheorem{thm}{Theorem}

\DeclareMathOperator*{\infm}{inf}
\DeclareMathOperator*{\minimize}{minimize}

\DeclareMathOperator*{\subjt}{subject \ to}
\DeclareMathOperator*{\sgn}{sign}

\newcommand{\bi}{\begin{itemize}}
\newcommand{\ei}{\end{itemize}}
\newcommand{\bd}{\begin{displaymath}}
\newcommand{\ed}{\end{displaymath}}
\newcommand{\be}{\begin{eqnarray*}}
\newcommand{\ee}{\end{eqnarray*}}

\makeindex             


\begin{document}

\title*{Data-Driven Nonlinear Stabilization Using Koopman Operator}
\titlerunning{Data-Driven Nonlinear Stabilization}
\author{Bowen Huang, Xu Ma, and Umesh Vaidya}
\authorrunning{B.~Huang, X.~Ma, and U.~Vaidya}
\institute{Bowen Huang \at Iowa state university, 3113 Coover Hall, Ames, IA, \email{bowen@iastate.edu}
\and Xu Ma \at Iowa state university, 3113 Coover Hall, Ames, IA, \email{maxu@alumni.iastate.edu}
\and Umesh Vaidya \at Iowa state university, 3215 Coover Hall, Ames, IA, \email{ugvaidya@iastate.edu}
}
%
%
\maketitle

\abstract{We propose the application of Koopman operator theory for the design of stabilizing feedback controller for a nonlinear control system. The proposed approach is data-driven and relies on the use of time-series data generated from the control dynamical system for the lifting of a nonlinear system in the Koopman eigenfunction coordinates. In particular, a finite-dimensional bilinear representation of a control-affine nonlinear dynamical system is constructed in the Koopman eigenfunction coordinates using time-series data. Sample complexity results are used to determine the data required to achieve the desired level of accuracy for the approximate bilinear representation of the nonlinear system in Koopman eigenfunction coordinates. 
A control Lyapunov function-based approach is proposed for the design of stabilizing feedback controller, and the principle of inverse optimality is used to comment on the optimality of the designed stabilizing feedback controller for the bilinear system.   A systematic convex optimization-based formulation is proposed for the search of control Lyapunov function. Several numerical examples are presented to demonstrate the application of the proposed data-driven stabilization approach. 
}


\section{Introduction}
Providing a systematic procedure for the design of stabilizing feedback control for a general nonlinear system will have a significant impact on a variety of application domains. The lack of proper structure for a general nonlinear system makes this design problem challenging. There have been several attempts to provide such a systematic approach, including convex optimization-based Sum-of-Squares (SoS) programming \cite{SOS_book, Parrilothesis} and differential geometric-based feedback linearization control \cite{sastry2013nonlinear,astolfi2015feedback}. The introduction of operator theoretic methods from the ergodic theory of dynamical systems provides another opportunity for the development of systematic methods for the design of feedback controllers \cite{Lasota}. The operator theoretic methods provide a linear representation for a nonlinear dynamical system. This linear representation of the nonlinear system is made possible by shifting the focus from state space to space of functions using two linear and dual operators, namely, the Perron-Frobenius (P-F) and Koopman operators. The work involving the third author \cite{VaidyaMehtaTAC, Vaidya_CLM,raghunathan2014optimal} provided a systematic linear programming-based approach involving transfer P-F operator for the optimal control of nonlinear systems. This contribution was made possible by exploiting the linearity and the positivity properties of the P-F operator. 

More recently, there has been increased research activity on the use of Koopman operator for the analysis and control of nonlinear systems \cite{Meic_model_reduction,mezic_koopmanism,susuki2011nonlinear,kaiser2017data,surana_observer,peitz2017koopman,mauroy2016global,surana2018koopman}. This recent work is mainly driven by the ability to approximate the spectrum (i.e., eigenvalues and eigenfunctions) of the Koopman operator from time-series data \cite{rowley2009spectral, DMD_schmitt, EDMD_williams, Umesh_NSDMD}. The data-driven approach for computing the spectrum of the Koopman operator is attractive as it opens up the possibility of employing operator theoretic methods for data-driven control. Research works in \cite{kaiser2017data,peitz2017koopman,korda2018linear,korda2018power,arbabi2018data,hanke2018koopman,sootla2018optimal} are proposing to develop Koopman operator-based data-driven methods for the design of optimal control and model predictive control for nonlinear and partial differential equations as well. The existing approaches rely on identification of linear predictors and the use of linear control design techniques for Koopman-based control. However, the tightness of these linear predictors cannot be theoretically guaranteed. In comparison, this book chapter proposes data-driven identification and bilinear representation of nonlinear control systems in Koopman eigenfunction coordinates. The bilinear representation is tight and theoretically justified in the sense that in the limit as the number of basis function approaches infinity, the finite-dimensional bilinear representation will approach the true lifting of a control system in the function space. To address the control design problem of a more complex bilinear system, we propose a control Lyapunov function-based approach for feedback stabilization. Furthermore, sample complexity results from \cite{sample_complexity} are used to characterize the relationship between the amount of training data and the approximation error of our bilinear predictor. The work in this book chapter is the extended version of the work presented in \cite{bowen_koopmanstabilziationCDC}, where the data-driven identification for control component is new.

The main contributions of the book chapter are as follows. We present a data-driven approach for feedback stabilization of a nonlinear system (refer to Fig. \ref{fig_schematic}). We first show that the nonlinear control system can be identified from the time-series data generated by the system for two different input signals, namely zero input and step input. For this identification, we make use of linear operator theoretic framework involving Fokker Planck equation. Furthermore, sample complexity results developed in \cite{sample_complexity} are used to determine the data required to achieve the desired level for the approximation. 
This process of identification leads to a finite-dimensional bilinear representation of the nonlinear control system in Koopman eigenfunction coordinates. This finite-dimensional approximation of the bilinear system is used for the design of a stabilizing feedback controller. While the control design for a bilinear system is, in general, a challenging problem, we propose a systematic approach based on the theory of control Lyapunov function (CLF) and inverse optimality for feedback control design \cite{Khalil_book}. While the search for CLFs for a general nonlinear system is a difficult problem, we use a bilinear representation of the nonlinear control system in the Koopman eigenfunction space to search for a CLF for the bilinear system. By restricting the search of CLFs to a class of quadratic Lyapunov functions, we can provide a convex programming-based systematic approach for determining the CLF \cite{Boyd_book}. The principle of inverse optimality allows us to connect the CLF to an optimal cost function. The controller designed using CLF also optimizes an appropriate cost. Using this principle, we comment on the optimality of the controller designed using CLF.

The main contributions of this work are as follows. We present a data-driven approach for the identification and representation of a nonlinear control system as a bilinear system. The bilinear structure of the control dynamical system is exploited to provide a systematic approach for the feedback stabilization of nonlinear systems. The proposed systematic approach relies on control Lyapunov function (CLF) and quadratic stabilization in Koopman eigenfunction space. A convex optimization-based formulation is proposed for searching quadratic CLFs. The CLF is used to propose a different formula for the stabilizing feedback control. One of them is the Sontag formula which allows us to comment on the optimality of the designed stabilizing feedback controller using the principle of inverse optimality. 

\begin{figure}[htbp]
\centering
\includegraphics[width=1.0\linewidth]{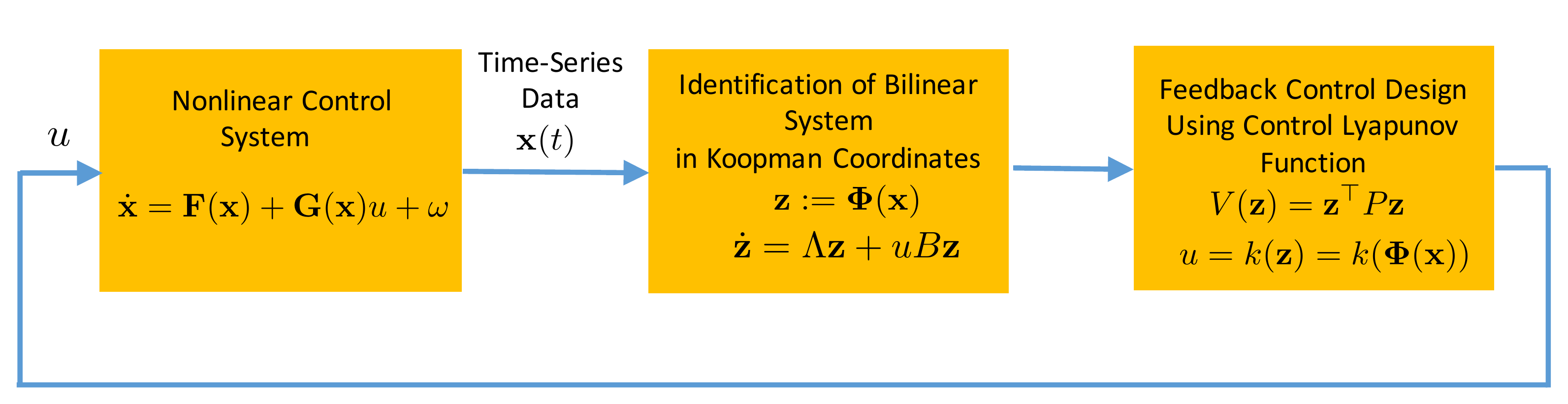}
\caption{Data-Driven Identification and Control of Nonlinear System}
 \label{fig_schematic}
\end{figure}
This book chapter is organized as follows. In Section \ref{section_prelim}, we present some preliminaries on the Koopman operator, Fokker Planck equation, and control Lyapunov functions.  In Section \ref{section_bilinear}, we present the identification scheme for the data-driven identification of a nonlinear control system as a bilinear system in Koopman eigenfunction coordinates. In Section \ref{section_main}, a convex optimization-based formulation is proposed to search for quadratic CLFs and for the design of stabilizing feedback controller.  Simulation results are presented in Section  \ref{section_simulation}, followed by conclusion in Section \ref{section_conclusion}.

\section{Preliminaries}\label{section_prelim}

In this section, we present some preliminaries on the Koopman operator, Fokker Planck equation, and control Lyapunov function-based approach on the design of stabilizing feedback controllers for nonlinear systems. 
\subsection{Koopman Operator}
Consider a continuous-time dynamical system of the form 
\begin{eqnarray}
\dot{\vec x}=\vec F(\vec x)\label{system}
\end{eqnarray}
where $\vec x\in X\subset \mathbb{R}^n$ and the vector field $\vec F$ is assumed to be continuously differentiable. Let $\vec S(t,\vec x_0)$ be the solution of the system (\ref{system}) starting from initial condition $\vec x_0$ and at time $t$. Let $\cal O$ be the space of all observables $f: X\to \mathbb{C}$. 
\begin{definition}[Koopman operator] The Koopman semigroup of operators $U_t :{\cal O}\to {\cal O}$ associated with system (\ref{system}) is defined by
\end{definition}
\begin{eqnarray}
[U_t f](\vec x)=f(\vec S(t,\vec x)).
\end{eqnarray}
It is easy to observe that the Koopman operator is linear on the space of observables although the underlying dynamical system is nonlinear.  In particular, we have
\[[U_t (\alpha f_1 +f_2)](\vec x)=\alpha [U_t f_1](\vec x)+[U_t f_2](\vec x).\]

Under the assumption that the function $f$ is continuously differentiable, the semigroup $[U_t f](\vec x)=\rho(\vec x,t)$ can be obtained as the solution of the following partial differential equation
\[\frac{\partial \rho}{\partial t}=\vec F\cdot \nabla \rho=: L \rho\]
with initial condition $\rho(\vec x,0)=f(\vec x)$. From the semigroup theory it is known \cite{Lasota} that the operator $L$ is the infinitesimal generator for the Koopman operator, i.e., 
\[L \rho=\lim_{t\to 0}\frac{U_t \rho-\rho}{t}.\]

The linear nature of Koopman operator allows us to define the eigenfunctions and eigenvalues of this operator as follows.
\begin{definition}[Koopman eigenfunctions] The eigenfunction of Koopman operator is a function $\phi_\lambda\in {\cal O}$ that satisfies
\begin{eqnarray}
[U_t \phi_\lambda](x)=e^{\lambda t} \phi_\lambda(x)
\end{eqnarray}
for some $\lambda\in \mathbb{C}$. The $\lambda$ is the associated eigenvalue of the Koopman eigenfunction and is assumed to belong to the point spectrum. 
\end{definition}

The spectrum of the Koopman operator is far more complex than simple point spectrum and could include continuous spectrum \cite{Meic_model_reduction}. The eigenfunctions can also be expressed in terms of the infinitesimal generator of the Koopman operator $L$ as follows
\[L \phi_\lambda =\lambda \phi_\lambda.\]
The eigenfunctions of Koopman operator corresponding to the point spectrum are smooth functions and can be used as coordinates for linear representation of nonlinear systems.  
\subsection{Fokker Planck Equation}
We need the preliminaries on Fokker Planck equation for the purpose of data-driven identification of nonlinear control system. Consider a nonlinear dynamical system perturbed with white noise process. 
\begin{eqnarray}
\dot {\vec x}={\vec F}({\vec x})+{\vec \omega}\label{sde}
\end{eqnarray}
where ${\vec \omega}$ is the white noise process. Following assumption is made on the vector function $\vec F$.
\begin{assumption}\label{assume_diff}
Let $\vec F =(\vec F_1,\ldots, \vec F_n)^\top$. We assume that the functions $\vec F_i$ $i=1,\ldots n$ are ${\vec C}^4$ functions. 
\end{assumption}
We assume that the distribution of ${\vec x}(0)$ is absolutely continuous and has density $p_0({\vec x})$.   Then we know that ${\vec x}(t)$ has a density $p({\vec x},t)$ which satisfies following Fokker-Planck (F-P) equation also known as Kolomogorov forward equation.
\begin{eqnarray}
\frac{\partial p({\vec x},t)}{\partial t}=-  \nabla \cdot\left( {\vec F}({\vec x})p({\vec x},t)\right) +\frac{1}{2}\nabla^2 p({\vec x},t)
\end{eqnarray}
Following Assumption \ref{assume_diff}, we know the solution $p({\vec x},t)$ to F-P equation exists and is differentiable (Theorem 11.6.1 \cite{Lasota}). Under some regularity assumptions on the coefficients of the F-P equation (Definition 11.7.6 \cite{Lasota}) it can be shown that the F-P admits a generalized solution. The generalized solution is used in defining stochastic semi-group of operators ${\vec \{\mathbb{P}}_t\}_{t\geq 0}$ such that 
\begin{eqnarray}
[\mathbb{P}_t p_0]({\vec x})=p({\vec x},t).
\end{eqnarray}
Furthermore, the right hand side of the F-P equation is the infinitesimal generator for stochastic semi-group of operators $\mathbb{P}_t$ i.e.,

\begin{eqnarray}
\mathbb{A} \varphi=\lim_{t\to 0}\frac{(\mathbb{P}_t^s-I)\varphi}{t}
\end{eqnarray}
where 
\[\mathbb{A}\varphi :=-  {\vec \nabla} \cdot\left( ({\vec F}({\vec x}) \varphi)\right) +\frac{1}{2}\nabla^2 \varphi\]
Let $\psi(x)\in {\vec C}^2(\mathbb{R}^n)$ be an observable we have 
\begin{eqnarray}
\frac{d}{dt}\int p({\vec x},t) \psi({\vec x}) dx=\int {\mathbb{A}}p({\vec x},t) \psi({\vec x})d{\vec x}\nonumber\\=\int p({\vec x},t)\mathbb{A}^*\psi({\vec x})d{\vec x}
\end{eqnarray}
where $\mathbb{A}^*$ is adjoint to $\mathbb{A}$ and is defined as \begin{equation}\label{eq:Koopmangenerator}
\mathbb{A}^*\psi={\vec F}\cdot \nabla \psi+\frac{1}{2}\nabla^2 \psi
\end{equation}
The semi-group corresponding to the $\mathbb{A}^*$ operator is given by 
\begin{eqnarray}\mathbb{A}^*\psi =\lim_{t\to 0}\frac{(\mathbb{U}_t-I)\psi}{t}\label{generatorstoc_koopman}
\end{eqnarray}
where 
\begin{equation}\label{eq:Koopmangroup}
[\mathbb{U}_t\psi]({\vec x})=\mathbb{E}[\psi({\vec x}(t))\mid {\vec x}(0)={\vec x}].
\end{equation}

For the deterministic dynamical system $\dot {\vec x}={\vec F}({\vec x})$, i.e., in the absence of noise term, the above definitions of generators and semi-groups reduces to Perron-Frobenius and Koopman operators. In particular, the propagation of probability density function capturing uncertainty in initial condition is given by Perron-Frobenius (P-F) operator and is defined as follows.
\begin{definition}
The P-F operator for deterministic dynamical system $\dot {\vec x}={\vec F}({\vec x})$ is defined as follows
\begin{eqnarray}
[P_t p_0]({\vec x})=p_0(\vec S(-t,\vec x))\left|\frac{\partial \vec S(-t,\vec x)}{\partial {\vec x}}\right|
\end{eqnarray}
where $\vec S(t,\vec x)$ be the solution of the system (\ref{system}) starting from initial condition $\vec x$ and at time $t$, and $\left|\cdot \right|$ stands for the determinant. 
\end{definition}
The infinitesimal generator for the P-F operator is given by
\begin{eqnarray}
A \varphi := -\nabla \cdot ({\vec F}({\vec x}) \varphi)=\lim_{t\to 0}\frac{(P_t-I)\varphi}{t}
\end{eqnarray}

\subsection{Feedback Stabilization and Control Lyapunov Functions}
For the simplicity of the presentation, we will consider only the case of single input in this paper. All the results carry over to the multi-input case in a straightforward manner. Consider a single input control affine system of the form.
\begin{align}
\label{non_lin_sys}
\dot{\vec x} = \vec F(\vec x)+\vec G(\vec x)u, 
\end{align}
where $\vec x(t) \in \mathbb{R}^n$ denotes the state of the system, $u(t) \in \mathbb{R}$ denotes the single input of the system, and $\vec F, \vec G: \mathbb{R}^n \rightarrow \mathbb{R}^n$ are assumed to be continuously differentiable mappings. We assume that $\vec F(\vec0) = \vec0$ and the origin is an unstable equilibrium point of the uncontrolled system $\dot{\vec x}=\vec F(\vec x)$.

The \textit{state feedback stabilization} problem associated with system (\ref{non_lin_sys}) seeks a possible feedback control law of the form
\begin{align*}
u = k(\vec x)
\end{align*}
with $k: \mathbb{R}^n \rightarrow \mathbb{R}$ such that $\vec x = \vec 0$ is asymptotically stable within some domain $\mathcal{D} \subset \mathbb{R}^n$ for the closed-loop system
\begin{align}
\dot{\vec x} = \vec F(\vec x)+\vec G(\vec x)k(\vec x). \label{closed_loop}
\end{align}

One of the possible approaches for the design of stabilizing feedback controllers for the nonlinear system (\ref{non_lin_sys}) is via control Lyapunov functions that are defined as follows.

\begin{definition}
Let $\mathcal{D} \subset \mathbb{R}^n$ be a neighborhood that contains the equilibrium $\vec x = \vec0$. A \textit{control Lyapunov function} (CLF) is a continuously differentiable positive definite function $V: \mathcal{D} \rightarrow \mathbb{R}_+$ such that for all $\vec x \in \mathcal{D} \setminus \{\vec0\}$ we have
\end{definition}
\begin{align*}
\infm_u \ \left[ \frac{\partial V}{\partial x} \cdot \vec F(\vec x) + \frac{\partial V}{\partial x} \cdot \vec G(\vec x)u \right] := \infm_u \ \Big[ V_x\vec F(\vec x) + V_x\vec G(\vec x) u \Big] <0
\end{align*}

It has been shown in \cite{artstein1983stabilization, sontag1989universal} that the existence of a CLF for system (\ref{non_lin_sys}) is equivalent to the existence of a stabilizing control law $u = k(\vec x)$ which is almost smooth everywhere except possibly at the origin $\vec x = \vec 0$.

\begin{theorem} [see \cite{astolfi2015feedback}, Theorem 2] \label{clf_thm}
There exists an almost smooth feedback $u = k(\vec x)$, i.e., $k$ is continuously differentiable for all $\vec x \in \mathbb{R}^n \setminus \{\vec0\}$ and continuous at $\vec x = \vec0$, which globally asymptotically stabilizes the equilibrium $\vec x = \vec0$ for system (\ref{non_lin_sys}) if and only if there exists a radially unbounded CLF $V(\vec x)$ such that
\begin{enumerate}
\item For all $\vec x \neq \vec0$, $V_x\vec G(\vec x) = 0$ implies $V_x\vec F(\vec x) < 0$;
\item For each $\varepsilon > 0$, there is a $\delta > 0$ such that $\|\vec x\| < \delta$ implies the existence of a $|u| < \varepsilon$ satisfying $V_x\vec F(\vec x) + V_x\vec G(\vec x) u < 0$.
\end{enumerate}
\end{theorem}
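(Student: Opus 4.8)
The plan is to prove the two implications separately, recognizing the statement as the conjunction of Artstein's theorem with Sontag's universal construction. For the \emph{sufficiency} direction (CLF $\Rightarrow$ stabilizing feedback), I would introduce the scalar functions $a(\vec x) := V_x\vec F(\vec x)$ and $b(\vec x) := V_x\vec G(\vec x)$ and write down Sontag's universal formula
\begin{align*}
k(\vec x) = \begin{cases} -\dfrac{a(\vec x) + \sqrt{a(\vec x)^2 + b(\vec x)^4}}{b(\vec x)}, & b(\vec x) \neq 0, \\[1mm] 0, & b(\vec x) = 0. \end{cases}
\end{align*}
A direct substitution gives $\dot V = a + bk = -\sqrt{a^2 + b^4}$, which is strictly negative for every $\vec x \neq \vec 0$ precisely because Condition 1 guarantees that $b(\vec x) = 0$ forces $a(\vec x) < 0$. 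Hence $V$ is a strict Lyapunov function for the closed loop, and radial unboundedness upgrades local to global asymptotic stability.

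The regularity of $k$ is where the two hypotheses do their work, and I expect this to be the main obstacle. Away from the origin I would verify that the map $(a,b) \mapsto (a + \sqrt{a^2+b^4})/b$ is real-analytic on the open set $\{b \neq 0\} \cup \{a < 0\}$, and that Condition 1 confines the pair $(a(\vec x), b(\vec x))$ to this set for all $\vec x \neq \vec 0$; composing with the $C^1$ maps $a$ and $b$ then yields continuous differentiability of $k$ on $\mathbb{R}^n \setminus \{\vec 0\}$. Continuity at the origin is the delicate point: I would show that the small-control property (Condition 2) forces $k(\vec x) \to 0$ as $\vec x \to \vec 0$, by arguing that whenever $\|\vec x\|$ is small, the guaranteed existence of some admissible $|u| < \varepsilon$ with $a + bu < 0$ bounds the Sontag value $|k(\vec x)|$ by a quantity of the same order as $\varepsilon$.

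For the \emph{necessity} direction (stabilizing feedback $\Rightarrow$ CLF), the idea is to run a converse Lyapunov argument on the closed-loop system $\dot{\vec x} = \vec F(\vec x) + \vec G(\vec x)k(\vec x)$. Since this system is globally asymptotically stable at the origin, with right-hand side continuous everywhere and smooth on $\mathbb{R}^n \setminus \{\vec 0\}$, a converse theorem of Kurzweil type supplies a smooth, positive definite, radially unbounded $V$ satisfying $V_x(\vec F + \vec G k) < 0$ for all $\vec x \neq \vec 0$. I would then read off the two conditions: on the set where $V_x\vec G = b = 0$ the control term drops out, so $V_x\vec F = V_x(\vec F + \vec G k) < 0$, giving Condition 1; and since the origin is an equilibrium of the closed loop with $\vec F(\vec 0) = \vec 0$, continuity of $k$ at the origin together with $k(\vec 0) = 0$ makes $u = k(\vec x)$ an admissible small control as $\vec x \to \vec 0$, giving Condition 2. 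The chief technical care here lies in invoking a converse theorem with exactly the stated regularity, given that the feedback is only continuous, not smooth, at the origin.
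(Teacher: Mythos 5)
The paper never proves this theorem: it is imported verbatim with the citation to \cite{astolfi2015feedback} (and the Artstein--Sontag provenance is noted in the paragraph just above it), so there is no internal argument to compare yours against. Your blind reconstruction is precisely the standard proof from those sources, and it is sound: Sontag's universal formula gives $\dot V = -\sqrt{a^2+b^4}<0$ for $\vec x\neq \vec 0$ (using Condition 1 on the set $b=0$), the small-control property yields $|k(\vec x)|\lesssim \varepsilon + |b(\vec x)|$ near the origin and hence continuity there, and a Kurzweil-type converse Lyapunov theorem applied to the continuous, globally asymptotically stable closed loop delivers the CLF in the necessity direction, with Conditions 1 and 2 read off exactly as you describe. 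The one loose end worth flagging is regularity bookkeeping: the paper's definition of a CLF asks only that $V$ be $C^1$, in which case $a = V_x\vec F$ and $b = V_x\vec G$ are merely continuous (not $C^1$ as you assert), so Sontag's formula produces a feedback that is continuous, not continuously differentiable, on $\mathbb{R}^n\setminus\{\vec 0\}$; the $C^1$ feedback claimed in the statement requires $V\in C^2$ (the cited sources simply take $V$ smooth, and Kurzweil's theorem returns a smooth $V$, so the necessity half is unaffected). Relatedly, your necessity argument for Condition 2 uses $k(\vec 0)=\vec 0$, which the statement as quoted does not assert but which is part of the standard definition of an almost smooth feedback in \cite{astolfi2015feedback}.
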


In the theorem above, condition 2) is known as the small control property, and it is necessary to guarantee continuity of the feedback at $x \neq 0$. If both conditions 1) and 2) hold, an almost smooth feedback can be given by the so-called Sontag's formula
\begin{align}
\label{sontag} k(\vec x) := \begin{cases} -\frac{V_x\vec F + \sqrt{(V_x\vec F)^2 + (V_x\vec G)^4}}{V_x\vec G} & \text{if } V_x\vec G(\vec x) \neq 0 \\ 0 & \text{otherwise.} \end{cases}
\end{align}

Besides Sontag's formula, we also have several other possible choices to design a stabilizing feedback control law based on the CLF given in Theorem \ref{clf_thm}. For instance, if we are not constrained to any specifications on the continuity or amplitude of the feedback, we may simply choose
\begin{align}\label{control2}
k(\vec x) &:= -K \sgn\big[ V_x\vec G(\vec x) \big] \\
\label{control1}
k(\vec x) &:= -K V_x\vec G(\vec x)
\end{align}
with some constant gain $K > 0$. Then, differentiating the CLF with respect to time along trajectories of the closed-loop (\ref{closed_loop}) yields
\begin{align*}
\dot V & = V_x\vec F(\vec x) - K \big| V_x\vec G(\vec x) \big| \\
\dot V & = V_x\vec F(\vec x) - K V_x\vec G(\vec x)^2.
\end{align*}
Hence, by the stabilizability property of condition 1), there must exist some $K$ large enough such that $\dot V < 0$ for all $\vec x \neq \vec0$, because whenever $V_x\vec F(\vec x) \geq 0$ we have $V_x\vec G(\vec x) \neq 0$.

On the other hand, the CLFs also enjoy some optimality property using the principle of inverse optimal control. In particular, consider the following optimal control problem
\begin{align}
\minimize_u \quad & \int_0^\infty (q(x) + u^\top u)dt \label{cost} \\
\subjt \quad & \dot{\vec x} = \vec F(\vec x)+\vec g(\vec x)u \nonumber
\end{align}
for some continuous, positive semidefinite function $q: \mathbb{R}^n \rightarrow \mathbb{R}$. Then the modified Sontag's formula
\begin{align}
\label{mod_sontag} k(\vec x) := \begin{cases} -\frac{V_x\vec F + \sqrt{(V_x\vec F)^2 + q(x)(V_x\vec G)^2}}{V_x\vec G} & \text{if } V_x\vec G(\vec x) \neq 0 \\ 0 & \text{otherwise} \end{cases}
\end{align}
builds a strong connection with the optimal control. In particular, if the CLF has level curves that agree in shape with those of the value function associated with cost (\ref{cost}), then the modified Sontag's formula (\ref{mod_sontag}) will reduce to the optimal controller \cite{freeman1996control, primbs1999nonlinear}.

\section{Data-driven Identification of Nonlinear System  }\label{section_bilinear}

In this section we discuss the application of linear operator theoretic framework  for the identification of nonlinear dynamical system in the Koopman eignfunctions space. Consider the control dynamical system perturbed by stochastic noise process. 
\begin{eqnarray}\dot {\vec x}={\vec F}({\vec x})+{\vec G}({\vec x})u+{\vec\omega}\label{control_sde}
\end{eqnarray}
where ${\vec \omega}\in \mathbb{R}^n$ is the white noise process.  The presence of noise term is essential to ensure persistency of  excitation for the purpose of identification. Following assumption is made on the vector functions $\vec F$ and $\vec G$. 
\begin{assumption}\label{assume1}
Let $\vec F =(\vec F_1,\ldots, \vec F_n)^\top$ and $\vec G=(\vec G_1,\ldots, \vec G_n)^\top$.  We assume that the functions $\vec F_i$ and $\vec G_i$ for $i=1,\ldots n$ are ${\vec C}^4$ functions. 
\end{assumption}

The objective is to identify the nonlinear vector fields $\vec F$ and $\vec G$ using the time-series data generated by the control dynamical system and arrive at a continuous-time dynamical system of the form
\begin{eqnarray}
\dot {\vec z}=\Lambda {\vec z}+u B{\vec  z}\label{sys_bilinear_cont}
\end{eqnarray}
where ${\vec z}\in \mathbb{R}^N$ with $N\geq n$. We now make following assumption on the control dynamical system (\ref{control_sde}).
\begin{assumption}
We assume that all the trajectories of the control dynamical system (\ref{control_sde})  starting from different initial conditions for control input $u=0$ and for step input remains bounded. 
\end{assumption}
\begin{remark}
This assumption is essential to ensure that the control dynamical system can be identified from the time-series data generated by the system for two different inputs signals. 
\end{remark}
The goal is to arrive at a continuous-time bilinear representation of the nonlinear control system (\ref{control_sde}). Towards this goal we assume that the time-series data from the continuous time dynamical system (\ref{control_sde}) is available for two different control input namely zero input and step input. The discrete time-series data is generated from the continuous time dynamical system with sufficiently small discretization time step $\Delta t$ and this time-series data is represented as
\begin{eqnarray}
({\vec x}^s_{k+1},{\vec x}^s_k)\label{data}
\end{eqnarray}
The subscript $s$ signifies that the data is generated by dynamical system of the form 
\begin{eqnarray}\dot {\vec x}={\vec F}({\vec x})+{\vec G}({\vec x})s+{\vec\omega}\label{eq_s}
\end{eqnarray}
So that $s=0$ and $s=1$ corresponds to the case of zero input and step input respectively. 
Let
\[\Psi=[\psi_1,\ldots, \psi_N]\]
be the set of observables with $\psi_i:\mathbb{R}^n\to \mathbb{R}$. The time evolution of these observables under the continuous time control dynamical system with no noise can be written as 
\begin{eqnarray}\frac{d \Psi}{dt}&=&{\vec F}({\vec x})\cdot \nabla \Psi +u {\vec G}({\vec x})\cdot \nabla \Psi\nonumber\\
&=&{\cal A}\Psi +u{\cal B}\Psi
\end{eqnarray}
where ${\cal A}$ and $\cal B$ are linear operators. The objective is to construct the finite dimensional approximation of these linear operators, $\cal A$, and $\cal B$ respectively from time-series data to arrive at a finite dimensional approximation of control dynamical system as in Eq. (\ref{sys_bilinear_cont}). 

With reference to Eq. (\ref{eq:Koopmangenerator}), let $\mathbb{A}_1^*$ and $\mathbb{A}_0^*$ be the generator corresponding to the control dynamical system with step input i.e., $s=1$ and $s=0$ respectively in Eq. (\ref{eq_s}). We have 
\begin{eqnarray}(\mathbb{A}_1^*-\mathbb{A}_0^*)\psi={\vec G}({\vec x})\cdot \nabla \psi\label{g_approx}
\end{eqnarray}
Under the assumption that the sampling time $\Delta t$ between the two consecutive time-series data point is sufficiently small, the generators $\mathbb{A}_s^*$ can be approximated as 
\begin{eqnarray}\mathbb{A}_s^*\approx \frac{\mathbb{U}_{\Delta t}^s-I}{\Delta t}\label{f_approx}
\end{eqnarray}
Substituting for $s=1$ and $s=0$ in (\ref{f_approx}) and using (\ref{g_approx}), we obtain
\begin{eqnarray}
\frac{\mathbb{U}_{\Delta t}^1-\mathbb{U}_{\Delta t}^0}{\Delta t}\approx {\vec G}({\vec x})\cdot \nabla={\cal B}\label{B_approx}
\end{eqnarray}
and 
\begin{eqnarray}
\frac{\mathbb{U}_{\Delta t}^0-I}{\Delta t}\approx {\vec  F}({\vec x})\cdot \nabla={\cal A}\label{A_approx}
\end{eqnarray}
Using the time-series data generated from dynamical system  (\ref{eq_s}) for $s=0$ and $s=1$,  it is possible to construct the finite dimensional approximation of the operators $\mathbb{U}_{\Delta t}^0$ and $\mathbb{U}_{\Delta t}^1$ respectively thereby approximating the operators $\cal A$ and $\cal B$ respectively. In the following we explain the extended dynamic mode decomposition-based procedure for the approximation of these operators from time-series data.

\subsection{Finite Dimensional Approximation}
We use Extended Dynamic Mode Decomposition (EDMD) algorithm for the approximation of $\mathbb{U}_{\Delta t}^1$ and $\mathbb{U}_{\Delta t}^0$ thereby approximating $\cal A$ and $\cal B$ in Eqs. (\ref{A_approx}) and (\ref{B_approx}) respectively \cite{EDMD_williams}. For this purpose let the time-series data generated by the dynamical system (\ref{eq_s}) be given by
\begin{eqnarray}
\overline{\vec X} = [\vec x^s_1,\vec x^s_2,\ldots,\vec x^s_M],&\overline{\vec Y} = [\vec y^s_1,\vec y^s_2,\ldots,\vec y^s_M] \label{data}
\end{eqnarray}
where $\vec y^s_k=\vec x^s_{k+1}$ with $s=0$ or $s=1$ i.e., zero input and step input. Furthermore, let $\mathcal{H}=
\{\psi_1,\psi_2,\ldots,\psi_N\}$ be the set of dictionary functions or observables and ${\cal G}_{\cal H}$ be the span of $\cal H$.
The choice of dictionary functions is very crucial and it should be rich enough to approximate the leading eigenfunctions of the Koopman operator. Define vector-valued function $\varPsi:X\to \mathbb{C}^{N}$
\begin{equation}
\varPsi(\vec x):=\begin{bmatrix}\psi_1(\vec{x}) & \psi_2(\vec{x}) & \cdots & \psi_N(\vec{x})\end{bmatrix}^\top.
\end{equation}
In this application, $\varPsi$ is the mapping from state space to function space. Any two functions $f$ and $\hat{f}\in \mathcal{G}_{\cal H}$ can be written as
\begin{eqnarray}
f = \sum_{k=1}^N a_kh_k=\varPsi^\top \vec{a},\quad \hat{f} = \sum_{k=1}^N \hat{a}_kh_k=\varPsi^\top \vec{\hat{a}}
\end{eqnarray}
for some coefficients $\vec{a}$ and $\vec{\hat{a}}\in \mathbb{C}^N$. Let \[ \hat{f}(\vec{x})=[U^s_{\Delta t}f](\vec{x})+r\]
where $r$ is a residual function that appears because $\mathcal{G}_{\cal H}$ is not necessarily invariant to the action of the Koopman operator. To find the optimal mapping which can minimize this residual, let $\vec U$ be the finite dimensional approximation of the Koopman operator $U^s_{\Delta t}$. Then the  matrix $\vec U^s$ is obtained as a solution of least-squares problem as follows 
\begin{equation}\label{edmd_op}
\minimize_{{\vec U}^s} \quad \|{\vec G}^s{\vec U}^s-{\vec A}^s\|_F
\end{equation}
where
\begin{eqnarray}\label{edmd1}
{\vec G}^s=\frac{1}{M}\sum_{m=1}^M \varPsi(\vec{x}^s_m)^\top \varPsi(\vec{x}^s_m),\;\;\;
{\vec A}^s=\frac{1}{M}\sum_{m=1}^M \varPsi(\vec{x}^s_m)^\top \varPsi(\vec{y}^s_m)
\end{eqnarray}
with ${\vec U}^s,{\vec G}^s,{\vec A}^s\in\mathbb{C}^{N\times N}$. The optimization problem (\ref{edmd_op}) can be solved explicitly with a solution in the following form
\begin{eqnarray}
{\vec U}^s=({\vec G}^s)^\dagger {\vec A}^s\label{EDMD_formula}
\end{eqnarray}
where $({\vec G}^s)^{\dagger}$ denotes the psedoinverse of matrix ${\vec G}^s$.

Under the assumption that the leading Koopman eigenfunctions are contained within $\mathcal{G}_{\mathcal{H}}$, the eigenvalues of $\vec U$ are approximations of the Koopman eigenvalues. The right eigenvectors of ${\vec U}^{s=0}$ can be used then to generate the approximation of Koopman eigenfunctions. In particular, the approximation of Koopman eigenfunction is given by
\begin{equation}\label{EDMD_eigfunc_formula}
\phi_j=\varPsi ^\top v_j, \quad j = 1,\ldots,N
\end{equation}
where $v_j$ is the $j$-th right eigenvector of ${\vec U}^0$, and $\phi_j$ is the approximation of the eigenfunction of Koopman operator corresponding to the $j$-th eigenvalue, $\lambda_j\in \mathbb{C}$. 

The bilinear representation of nonlinear control dynamical system can be constructed either in the space of basis function ${\varPsi}$ or the eigenfunctions of the Koopman operator ${\varPhi}$, where 
\[\varPhi(\vec x):=[\phi_1(\vec x),\ldots, \phi_N(\vec x)]^\top.\]
In this work, we constructed the bilinear representation in the Koopman eigenfunctions coordinates. Towards this goal, we define

\[\hat{\varPhi}(\vec x):=[\hat \phi_1(\vec x),\ldots, \hat \phi_N(\vec x)]^\top\] where $\hat \phi_i:=\phi_i$ if $\phi_i$ is a real-valued eigenfunction and $\hat \phi_i:=2 {\rm Re}(\phi)$, $\hat \phi_{i+1}:=-2{\rm Im}(\phi_i)$, if $i$ and $i+1$ are complex conjugate eigenfunction pairs. Consider now the transformation as $\hat {\varPhi}: \mathbb{R}^n\to \mathbb{R}^N$ as 
\[\vec z=\hat{\varPhi}(\vec x).\]
Then in this new coordinates system Eq. (\ref{non_lin_sys}) takes the following form
\begin{eqnarray}
\dot{\vec z}=\Lambda \vec z+uB{\vec z} .\label{bilinear1}
\end{eqnarray}
where the matrix $\Lambda$ has a block diagonal form where the block corresponding to the eigenvalue $\hat\lambda_i$, such that $\Lambda_{(i,i)} =\hat\lambda_i$ if $\phi_i$ is real, and 
\begin{align}\label{eig_conversion}
\begin{bmatrix}\Lambda_{(i,i)}&\Lambda_{(i,i+1)}\\ \Lambda_{(i+1,i)}&\Lambda_{(i+1,i+1)}\end{bmatrix} =\lvert\lambda_i\rvert\begin{bmatrix}
\cos(\angle{\hat\lambda_i})&\sin(\angle{\hat\lambda_i})\\-\sin(\angle{\hat\lambda_i})&\cos(\angle{\hat\lambda_i})
\end{bmatrix}
\end{align}
if $\phi_i$ and $\phi_{i+1}$ are complex conjugate pairs.
The $\hat\lambda_i$ associated with the continuous time system dynamics. The relationship between discrete-time Koopman eigenvalues $\lambda_i$ and continuous time $\hat\lambda_i$ can be written as $\hat\lambda_i = \log(\lambda_i)/\Delta t$.

Similarly data generated using step for the control dynamical system is used to generate time-series data $\{{\vec x}_k^1\}$ and for the approximation of ${\vec U}^1$. The approximation of the operator ${\cal B}$ in the coordinates of basis functions, $\varPsi({\vec x})$ denoted by $\bar B$, and the eigenfunction coordinates $\hat {\varPhi}({\vec x})$ denoted by $B$ can be  obtained as follows:
\begin{eqnarray}
\bar B = \frac{\vec U^1-\vec U^0}{\Delta t},\;\;\;\;\;
B = V^\top{\bar B}(V^\top)^{-1}
\end{eqnarray}
where each column of $V$, $v_j$ is the $j$th eigenvector of $\vec U^0$.

There are two sources of error in the approximation of Koopman operator and its spectrum and will be reflected in the bilinear representation of nonlinear system namely in the $\Lambda$ and $B$ matrices. The first source of error is due to a finite number of basis functions used in the approximation of the Koopman operator. Under the assumption that the choice of basis functions is sufficiently rich and $N$ is large this approximation error is expected to be small. However, selection of basis function is a actively research topic with no agreement on the best choice of basis function for general nonlinear system. The second source of error, which is more relevant to this work, arise due to the finite length of data used in the approximation of the Koopman operator. Sample complexity results for control dynamical systems are developed in \cite{sample_complexity} to derive an analytical formula for the approximation of Koopman operator as the function of data length. We proved that the approximation error between the true Koopman operator and its approximation decreases as $\frac{1}{\sqrt{T}}$, where $T$ is the time length of the data. These sample complexity results are used to determine the data required to achieve the desired level of accuracy of the approximation. In particular, the bilinear representation of control dynamical system with approximation error due to the finite length of data explicitly accounted for can be written as 
\begin{eqnarray}
\dot {\vec z}=(\Lambda +\Delta \Lambda) {\vec z}+u(B+\Delta B){\vec z},\label{bilinear_uncertain}
\end{eqnarray}
where $\Delta \Lambda$ and $\Delta B$ are approximation error. Using sample complexity results discovered in \cite{sample_complexity}, we can determine the data length $M$ so that $\parallel \Delta \Lambda\parallel\leq \epsilon_\Lambda$ and $\parallel \Delta B\parallel\leq \epsilon_B$, with $\epsilon_{\Lambda}$ and $\epsilon_B$ being the predetermined acceptable bounds.



\section{Feedback Controller Design}\label{section_main}

The control Lyapunov function provides a powerful tool for the design of a stabilizing feedback controller which also enjoys some optimality property using the principle of inverse optimality. However, one of the main challenges is providing a systematic procedure to find CLFs. For a general nonlinear system finding a CLF remains a challenging problem. We exploit the bilinear structure of the nonlinear system in the Koopman eigenfunction space to provide a systematic procedure for computing control Lyapunov function. We restrict the search for the control Lyapunov function to the class of quadratic Lyapunov function of the form $V({\vec z})={\vec z}^\top P{\vec z}$. It is important to emphasize that although the Lyapunov function is restricted to be quadratic in Koopman eigenfunctions space $\vec z$, the Lyapunov function contains higher order nonlinearities in the original state space $\vec x$. Theorem \ref{bilin_stb} can be stated for the quadratic stabilization of the following bilinear control system.
\begin{eqnarray}
\dot {\vec z}=\Lambda  {\vec z}+u B{\vec z}\label{bilinear_sys}
\end{eqnarray}
In the sequel, if there exists a quadratic CLF for the bilinear system (\ref{bilinear_sys}), then we will say that the system (\ref{bilinear_sys}) is {\it quadratic stabilizable}.



\begin{thm}
\label{bilin_stb}
System (\ref{bilinear_sys}) is quadratic stabilizable if and only if there exists an $N \times N$ symmetric positive definite $P$ such that for all non-zero $\vec z \in \mathbb{R}^N$ with $\vec z^\top(P\Lambda+\Lambda^\top P)\vec z \geq 0$, we have $\vec z^\top(PB+B^\top P)\vec z \neq 0$.
\end{thm}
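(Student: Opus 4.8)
The plan is to unwind the definition of a control Lyapunov function for the quadratic candidate $V(\vec z) = \vec z^\top P \vec z$ and to show that the CLF infimum inequality is literally equivalent to the stated algebraic condition on $P$. First I would observe that since $P$ is symmetric positive definite, $V$ is automatically continuously differentiable, positive definite, and radially unbounded; so the only substantive requirement for $V$ to be a quadratic CLF is the infimum inequality from the CLF definition, namely $\infm_u\,[V_z \Lambda \vec z + u\,V_z B \vec z] < 0$ for every nonzero $\vec z$, where $V_z := \partial V/\partial \vec z = 2\vec z^\top P$.

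Next I would compute the derivative of $V$ along the trajectories of (\ref{bilinear_sys}). Using $\dot{\vec z} = \Lambda \vec z + u B \vec z$ and symmetry of $P$, and symmetrizing each resulting quadratic form via the fact that a scalar equals its own transpose, one obtains
\[
\dot V = \vec z^\top(P\Lambda + \Lambda^\top P)\vec z + u\,\vec z^\top(PB + B^\top P)\vec z.
\]
Writing $a(\vec z) := \vec z^\top(P\Lambda+\Lambda^\top P)\vec z$ and $b(\vec z) := \vec z^\top(PB+B^\top P)\vec z$, the CLF condition reduces to requiring $\infm_u\,[a(\vec z) + u\,b(\vec z)] < 0$ at every nonzero $\vec z$.

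The key observation is that $a(\vec z) + u\,b(\vec z)$ is affine in the unconstrained scalar $u \in \mathbb{R}$. Hence the infimum over $u$ equals $-\infty$ whenever $b(\vec z) \neq 0$, and equals $a(\vec z)$ whenever $b(\vec z) = 0$. Consequently the CLF inequality holds at a fixed nonzero $\vec z$ if and only if either $b(\vec z) \neq 0$, or else $b(\vec z) = 0$ together with $a(\vec z) < 0$. Quantifying over all nonzero $\vec z$ and taking the contrapositive of this disjunction, the inequality holds everywhere if and only if $a(\vec z) \geq 0$ forces $b(\vec z) \neq 0$ — precisely the condition in the statement. The two directions then follow at once: a $P$ satisfying the condition makes $V(\vec z) = \vec z^\top P\vec z$ a quadratic CLF, so the system is quadratic stabilizable, and conversely quadratic stabilizability furnishes such a $V$ and hence such a $P$.

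I do not expect a genuine obstacle here; the entire content lies in correctly evaluating the unconstrained infimum of an affine function of $u$ and recognizing that the resulting case split is logically equivalent to the single implication $a(\vec z) \geq 0 \Rightarrow b(\vec z) \neq 0$. The only points demanding minor care are the symmetrization step in computing $\dot V$ and confirming that positive definiteness of $P$ is exactly what guarantees $V$ is a legitimate positive definite, radially unbounded Lyapunov candidate, so that the algebraic condition captures the full CLF definition and not merely the infimum inequality.
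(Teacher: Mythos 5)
Your proposal is correct and follows essentially the same route as the paper's proof: both compute $\dot V = \vec z^\top(P\Lambda+\Lambda^\top P)\vec z + u\,\vec z^\top(PB+B^\top P)\vec z$ for the candidate $V(\vec z)=\vec z^\top P\vec z$ and exploit the fact that this expression is affine in the unconstrained scalar $u$, so it can be driven negative exactly when the coefficient of $u$ is nonzero or the constant term is already negative. The only cosmetic difference is that you package the two directions as a single equivalence via the infimum evaluation, whereas the paper argues sufficiency directly and necessity by contradiction; the underlying content is identical.
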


\begin{proof}
Sufficiency $(\Leftarrow)$: Suppose there is a symmetric, positive definite $P$ that satisfies the condition of Theorem \ref{bilin_stb}. We can use it to construct $V(\vec z) = \vec z^\top P\vec z$ as our Lyapunov candidate function, and the derivative of $V$ with respect to time along trajectories of (\ref{bilinear_sys}) is given by
\begin{align*}
\dot V & = \vec z^\top P\dot{\vec z} + \dot{\vec z}^\top P\vec z \nonumber \\
& = \vec z^\top(P\Lambda+\Lambda^\top P)\vec z + u\vec z^\top(PB+B^\top P)\vec z.
\end{align*}
Since for all $\vec z \neq 0$ we have $\vec z^\top(PB+B^\top P)\vec z \neq 0$ when $\vec z^\top(P\Lambda+\Lambda^\top P)\vec z \geq 0$, we can always find a control input $u(\vec z)$ such that
\begin{align*}
\dot V < 0, \quad \forall \vec z \in \mathbb{R}^N \setminus \{0\}.
\end{align*}
Therefore, $V(\vec z)$ is indeed a CLF for system (\ref{bilinear_sys}).

Necessity ($\Rightarrow$): We will prove this by contradiction. Suppose that system (\ref{bilinear_sys}) has a CLF in the form of $V(\vec z) = \vec z^\top P\vec z$, where $P$ does not satisfy the condition of Theorem \ref{bilin_stb}. That is, there exists some $\bar{\vec z} \neq 0$ such that ${\bar{\vec z}}^\top(P\Lambda+\Lambda^\top P)\bar{\vec z} \geq 0$ but $\bar{\vec z}^\top(PB+B^\top P)\bar{\vec z} = 0$. In this case, we have
\begin{align*}
\dot V(\bar{\vec z}) = \bar{\vec z}^\top(P\Lambda+\Lambda^\top P)\bar{\vec z} \geq 0
\end{align*}
for any input $u$, which contradicts the definition of a CLF. This completes the proof.
\end{proof}

Following convex optimization formulation can be formulated to search for quadratic Lyapunov function for bilinear system without uncertainty in Eq. (\ref{bilinear_sys}).
\begin{eqnarray}
\label{opt}
\minimize_{t > 0, \ P = P^\top} & \quad t - \gamma{\rm Trace}(P B) \nonumber \\
\subjt & \quad tI - (P\Lambda+\Lambda^\top P) \succeq 0 \nonumber \\
& \quad c^\text{max} I \succeq P \succeq c^\text{min} I
\end{eqnarray}
where $c^\text{max} > c^\text{min} > 0$, respectively, are two given positive scalars forming bounds for the largest and the least eigenvalues of $P$. The variable $t$ here represents an epigraph form for the largest eigenvalue of $P\Lambda+\Lambda^\top P$.

Optimization (\ref{opt}) has combined two objectives. On the one hand, we minimize the largest eigenvalue of $P\Lambda+\Lambda^\top P$. On the other hand, we try to maximize the least singular value of $PB+B^\top P$ the same time. Noticing that it may be difficult to maximize the least singular value of $PB+B^\top P$ directly, we maximize the trace of $PB$ instead and employ a parameter $\gamma > 0$ to balance these two objectives.

\begin{remark}
When an optimal $P^\star$ is solved from (\ref{opt}), we still need to check whether it satisfies the condition of Theorem \ref{bilin_stb} or not. So if one $P^\star$ fails the condition check, then we may tune the parameter $\gamma$ and solve the above optimization again until we obtain a correct $P^\star$. Nevertheless, we observe from simulations (see the multiple examples in our simulation section) that when we choose a $\gamma = 2$, optimization (\ref{opt}) will always yield an optimal $P^\star$ that satisfies the condition of Theorem \ref{bilin_stb}. 
\end{remark}

\begin{remark}
We also need to point out that, compared to searching for a nonlinear CLF for the original nonlinear system (\ref{non_lin_sys}), the procedure for seeking a quadratic CLF for the bilinear system (\ref{bilinear_sys}) becomes quite easier and more systematic. Furthermore, a quadratic CLF for the bilinear system is, in fact, non-quadratic (i.e., contains higher order nonlinear terms) for the system (\ref{non_lin_sys}).
\end{remark}

Once a quadratic control Lyapunov function $V(\vec z) = \vec z^\top P\vec z$ is found for bilinear system (\ref{bilinear_sys}), we have several choices for designing a stabilizing feedback control law. For instance, applying the control law (\ref{control2}) or (\ref{control1}) we can construct
\begin{align}
k(\vec z) &= -\beta_k \sgn\big[\vec z^\top(PB+B^\top P)\vec z \big] \\
k(\vec z) &= -\beta_k \vec z^\top(PB+B^\top P)\vec z.\label{control_formula}
\end{align}
Moreover, given a positive semidefinite cost $q(\vec z) \geq 0$, we may also apply the inverse optimality property to design an optimal control via Sontag's formula (\ref{mod_sontag}) to obtain
\begin{align}
k(\vec z) = \begin{cases} -\frac{\vec z^\top(P\Lambda+\Lambda^\top P)\vec z + \sqrt{(\vec z^\top(P\Lambda+\Lambda^\top P)\vec z)^2 + q(x)(\vec z^\top(PB+B^\top P)\vec z)^2}}{\vec z^\top(PB+B^\top P)\vec z} & \text{if } \vec z^\top(PB+B^\top P)\vec z \neq 0 \\ 0 & \text{otherwise}. \end{cases} \label{sontag_formula}
\end{align}
Following algorithm can be outlined for the design of stabilizing feedback controller from time-series data.
\begin{algorithm2e}[h]\label{algorithm}
\SetKwBlock{Phaseone}{Phase I: Modeling}{end}
\SetKwBlock{Phasetwo}{Phase II: Optimization}{end}
\SetAlgoLined
\KwData{Given open-loop time-series data $\{\vec{x}_k^0\}=\{\vec{x}_0^0,\vec{x}_1^0,\ldots,\vec{x}_M^0\}$, and $\{\vec x_k^1\}$ with $s=1$ in (\ref{eq_s}) both with Gaussian process noise added}
\KwResult{Feedback control $u=k(\vec z)$}
\Phaseone{
Choose $N$ dictionary functions $\varPsi(\vec x):=\begin{bmatrix}\psi_1(\vec{x}) & \psi_2(\vec{x}) & \cdots & \psi_N(\vec{x})\end{bmatrix}^\top$.

\For{$\vec{x}_i, \;i = 0,1,2,\ldots,M$}{
$\varPsi(\vec x_i):=\begin{bmatrix}\psi_1(\vec{x}_i) & \psi_2(\vec{x}_i) & \cdots & \psi_N(\vec{x}_i)\end{bmatrix}^\top$
}
Obtain $\vec G^0$ and $\vec A^0$ matrices
${\vec G^0}=\frac{1}{M} \sum_{m=1}^M \varPsi(\vec{x}_m) \varPsi(\vec{x}_m)^\top$;
${\vec A^0}=\frac{1}{M} \sum_{m=0}^{M-1} \varPsi(\vec{x}_m)\varPsi(\vec{x}_{m+1})^\top$.

Compute $\vec U^0 = ({\vec G}^0)^\dagger {\vec A}^0$, and its eigenfunctions $\phi_j=\varPsi^\top v_j$, where $v_j$ is the $j$th eigenvector of $\vec U^0$ with respect to eigenvalue $\lambda_j$, $j=1,2,\ldots,N$.

Convert to continuous time eigenvalues $\hat\lambda_i = \log(\lambda_i)/\Delta t$

Get $\varLambda = diag(\hat\lambda_1,\hat\lambda_2,\ldots,\hat\lambda_N)$ by block diagonalization of eigenvalues $\lambda_i$, use (\ref{eig_conversion}) if $i$, ${i+1}$ complex conjugate.

Obtain the new eigenfuntion $\hat\varPhi(\vec x)$ similarly, where $\hat \phi_i:=\phi_i$ if $\phi_i$ is a real-valued and $\hat \phi_i:=2 {\rm Re}(\phi)$, $\hat \phi_{i+1}:=-2{\rm Im}(\phi_i)$, if $i$ and $i+1$ are complex conjugate.

Replace the dictionary function $\varPsi(\vec x)$ with $\vec z = \hat{\varPhi}(\vec x)$ and repeat Step $2$ to $7$ with the datasets $\{\vec{x}_k^0\}$ and $\{\vec{x}_k^1\}$ to get $\bar{\vec{U}}^0$ and $\bar{\vec{U}}^1$.

Get $B = (\bar{\vec{U}}^1-\bar{\vec{U}}^0)/\Delta t$ }

\Phasetwo{
Solve the following convex problem for optimal $P*$ with $\Lambda$ and $B$,
\begin{eqnarray*}
\minimize_{t > 0, \ P = P^\top} & \quad t - \gamma{\rm Trace}(P B) \nonumber \\
\subjt & \quad tI - (P\Lambda+\Lambda^\top P) \succeq 0 \nonumber \\
& \quad c^\text{max} I \succeq P \succeq c^\text{min} I
\end{eqnarray*}
where $c^\text{max} > c^\text{min} > 0$, $\gamma>0$ are chosen properly.
}
Feedback control $u=k(\vec z)= -\beta_k \vec z^\top(PB+B^\top P)\vec z$ or modified Sontag's formula,
\begin{align*}
k(\vec z) = \begin{cases} -\frac{\vec z^\top(P\Lambda+\Lambda^\top P)\vec z + \sqrt{(\vec z^\top(P\Lambda+\Lambda^\top P)\vec z)^2 + q(x)(\vec z^\top(PB+B^\top P)\vec z)^2}}{\vec z^\top(PB+B^\top P)\vec z} & \text{if } \vec z^\top(PB+B^\top P)\vec z \neq 0 \\ 0 & \text{otherwise}. \end{cases}
\end{align*}
\caption{Data-driven Stabilization Controller design framework}
\end{algorithm2e}


\section{Simulation Results}\label{section_simulation}

\noindent{\bf Example 1: Duffing Oscillator}\\
\noindent The first example we present is for the stabilization of duffing oscillator. The controlled duffing oscillator equation is written as follows.
\begin{eqnarray}\label{duffing_sys}
\dot{x}_1 &=& x_2\\\nonumber
\dot{x}_2 &=& (x_1-x_1^3)-0.5x_2+u.
\end{eqnarray}
The uncontrolled equation for duffing oscillator consists of three equilibrium points, two of the equilibrium points at $(\pm 1,0)$ are stable, and one equilibrium point at the origin is unstable. For identification of the control system dynamics, we excite the system with white noise with zero mean and $0.01$ variance. The continuous time control equation is discretized with a sampling time of $\Delta t=0.25 s$. In Fig. \ref{fig:duffing}a, we show the sampling complexity plot for the approximation error as the function of data length. As proved in \cite{sample_complexity}, the error for the approximation of the $\Lambda$ and $B$ matrix decreases as $\frac{1}{\sqrt{T}}$, where $T$ is a data length. The error plot in Fig. \ref{fig:duffing}a satisfies this rate of decay. The sample complexity results in Fig. \ref{fig:duffing}a are obtained using ten randomly chosen initial condition and generating time-series data over the different length of time ranging from six-time steps to 30-time steps. For each fixed time step we compute the $\Lambda$ and $B$ matrices. The error $\parallel\Lambda-\bar \Lambda\parallel_2$ and $\parallel B-\bar B\parallel_2$ is computed  at each fixed time step where $\bar \Lambda$ and $\bar B$ are computed using data collected over $50$ time steps. The dictionary function used in the approximation of the Koopman operator has a maximum degree of five, i.e., $21$ basis function, $N=21$. In particular, following choice of dictionary function is made in the approximation.

\[{\varPsi}({\bf x}) = [1,\;x_1,\;x_2,\;x_1x_2,\;\ldots,\;x_1^5,\;x_1^4x_2,\;x_1^3x_2^2,\;x_1^2x_2^3,\;x_1x_2^4,\;x_2^5]\]

For control design, we use an approximation of $\Lambda$ and $B$ matrices computed over $30$ time steps. The controller is designed using the Algorithm \ref{algorithm}. 
For this duffing oscillator example, we use a control design formula in Eq. (\ref{control_formula}). To verify the effectiveness of the designed controller we simulate the closed loop system with the  \texttt{ode15s} solver in \textbf{MATLAB} starting from $10$ randomly chosen initial conditions within the region $[-1.5,1.5]\times [-1,1]$. In Fig. \ref{fig:duffing}c, we show the closed loop trajectories in red starting from different initial conditions overlaid on the open loop trajectories in blue. We notice that the controller force the trajectories of the closed-loop system along the stable manifold of the open loop system before the trajectories slide to the origin. The control plots from different initial conditions are shown in Fig. \ref{fig:duffing}c.

\begin{figure}[!htp]
\begin{framed}
\centering
\subfigure[]{\includegraphics[width=1.9in]{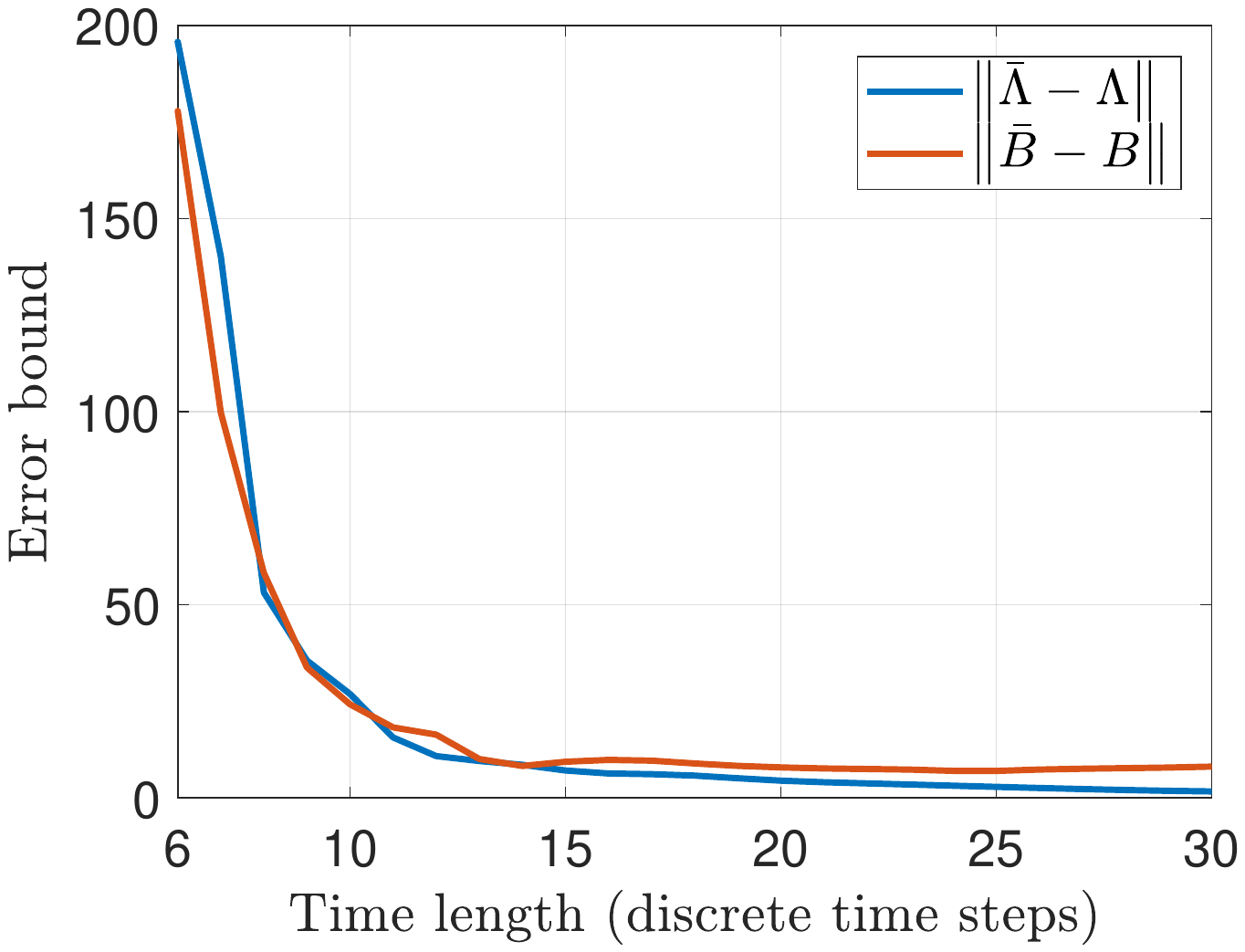}}\qquad
\subfigure[]{\includegraphics[width=1.9in]{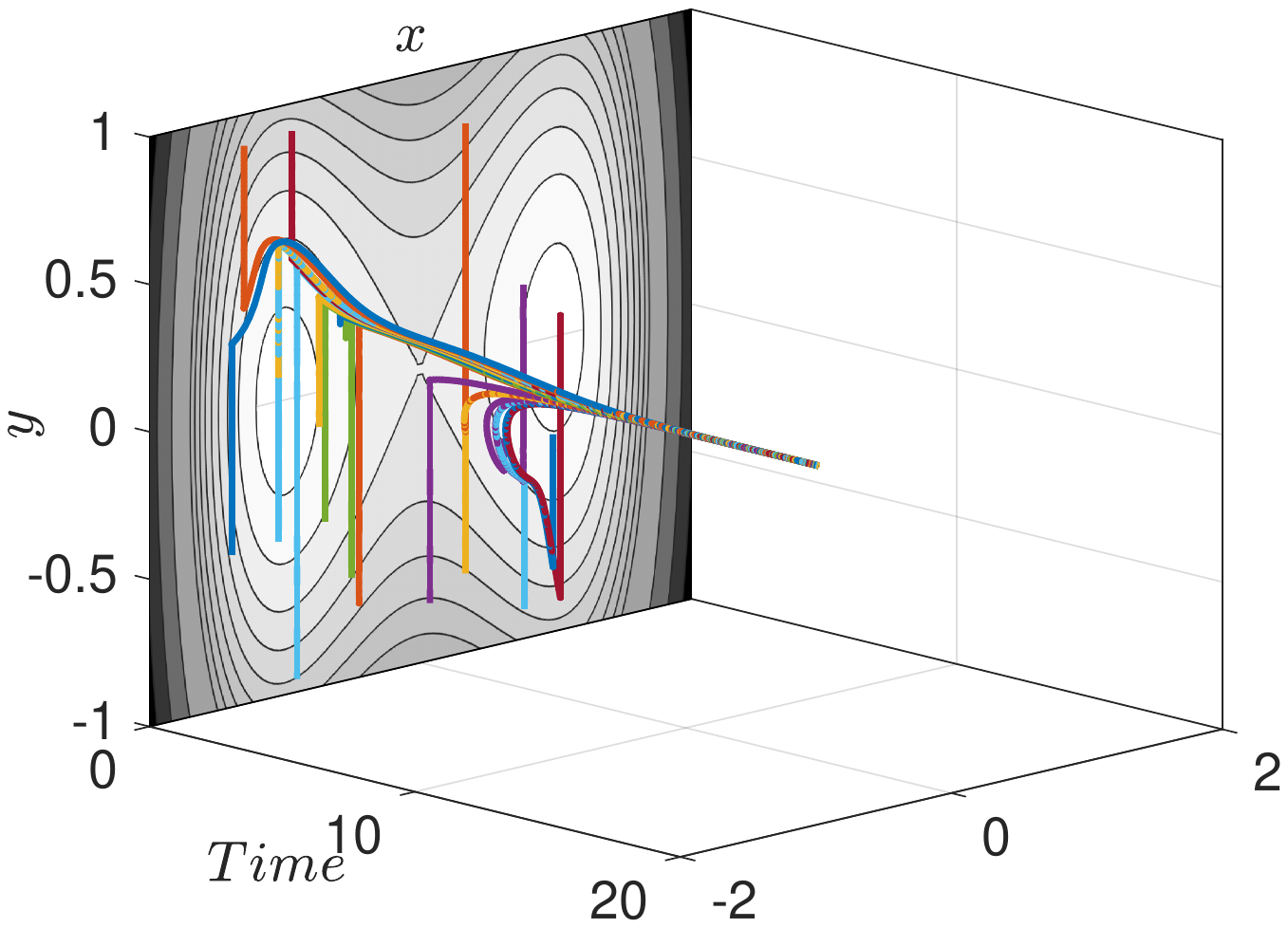}}

\subfigure[]{\includegraphics[width=1.9in]{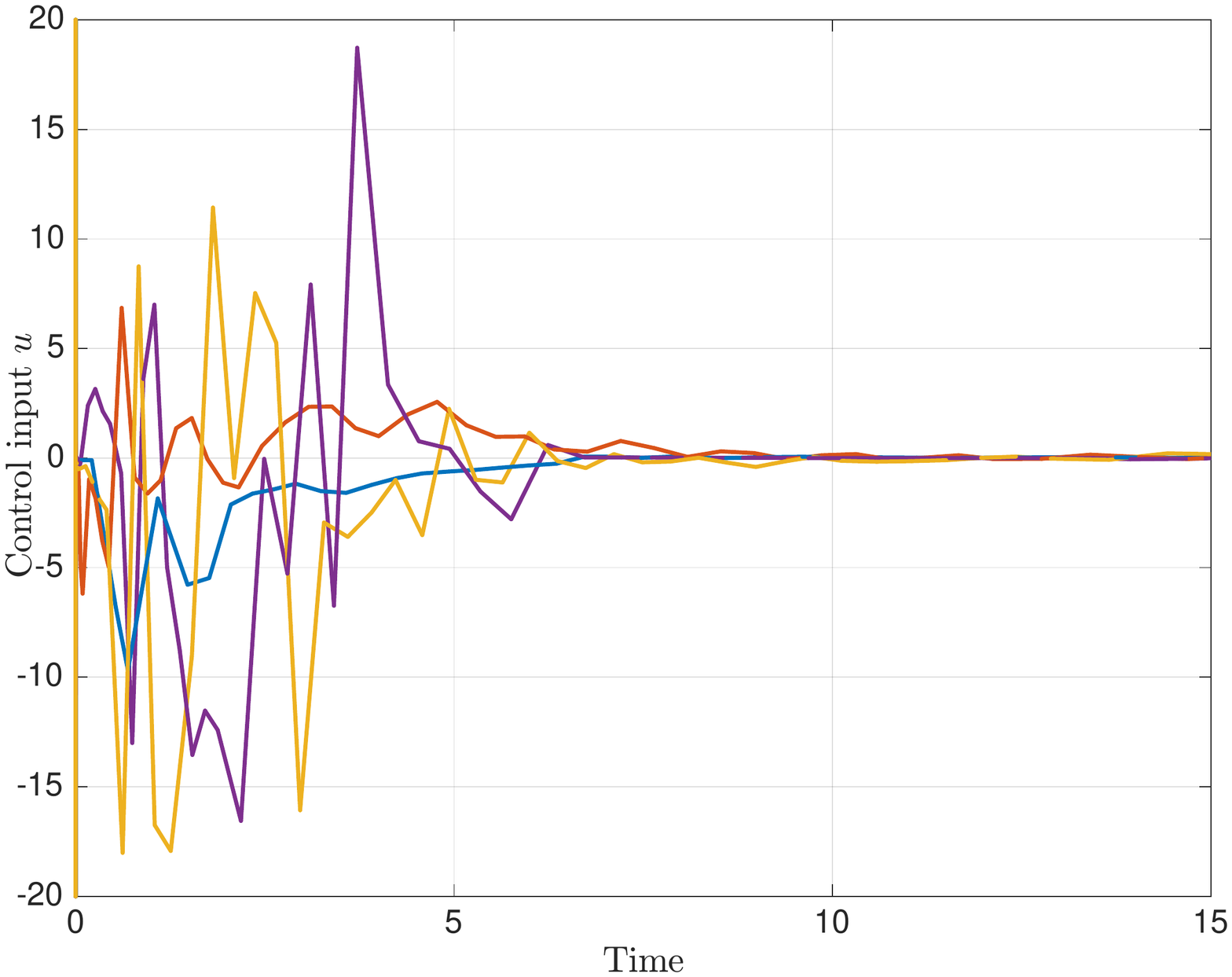}}\qquad
\subfigure[]{\includegraphics[width=1.9in]{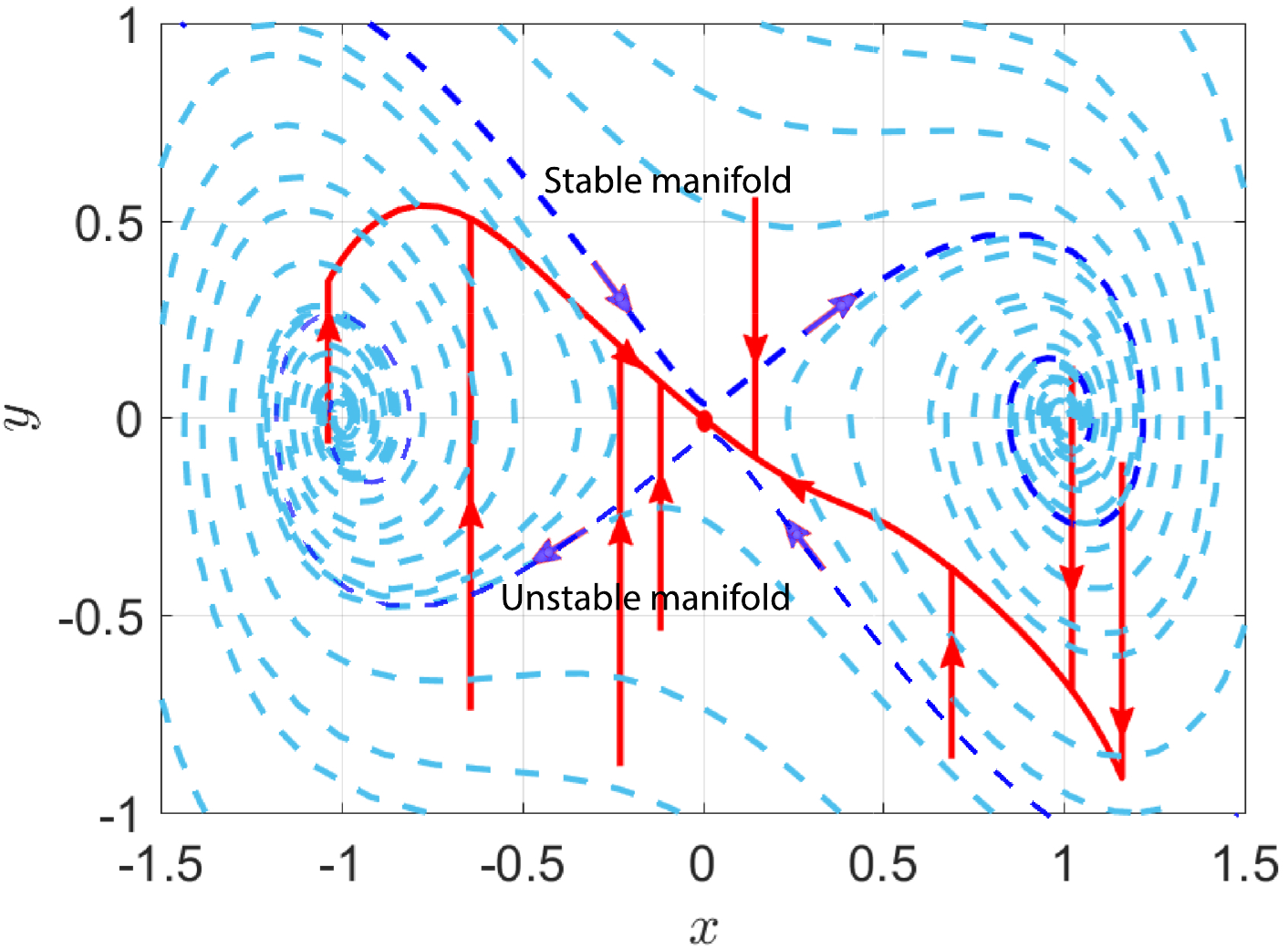}}
\caption{Data-driven stabilization of Duffing oscillator. a) Sample complexity error bounds for the approximation of $\Lambda$ and $B$ matrices as the function of data length; b) Closed-loop trajectories vs time from multiple initial conditions; c) Control value vs time from different initial conditions; d) Comparison of closed loop and open loop trajectories in state space.}
\label{fig:duffing}
\end{framed}
\end{figure}

\[\]

\noindent{\bf Example 2: Lorenz System}\\
\noindent The second example we pick is that of Lorentz system. The control Lorentz system can be written as follows
\begin{eqnarray}\label{L}
\dot{x}_1 &=& \sigma(x_2-x_1)\\\nonumber
\dot{x}_2 &=& x_1(\rho-x_3)-x_2+u\\\nonumber
\dot{x}_3 &=& x_1x_2 - \beta x_3
\end{eqnarray}
where $\vec x\in\mathbb{R}^3$ and $u\in\mathbb{R}$ is the single input. With the parameter values of $\rho=28$, $\sigma=10$, $\beta=\frac{8}{3}$, and control input $u=0$ the Lorenz system exhibit chaotic behavior. In this 3D example, we generated the time-series data from $1000$ random chosen initial conditions and propagate each of them for $T_{final}=10s$ with sampling time $\Delta t = 0.001s$. For the purpose of identification the system is excited with white noise input with zero mean and $0.01$ variance. The dictionary functions ${\varPsi}(\boldsymbol{\bf x})$ consists of 20 monomials of most degree $D = 3$.
\[\boldsymbol{\varPsi}(\boldsymbol{\bf x}) = [1,\;x_1,\;x_2,\;x_3,\;\ldots,\;x_1^3,\;x_1^2x_2,\;x_1^2x_3,\;x_1x_2x_3,\;\ldots\;x_3^3]\]
The objective is to stabilize one of the critical points $(\sqrt{\beta(\rho-1)},\sqrt{\beta(\rho-1)},\rho-1)$ of the Lorentz system. The system is stabilized using the control formula in Eq. (\ref{control_formula}).
To validate the closed loop control designed using the Algorithm \ref{algorithm}, we perform the closed loop simulation with five randomly chosen initial conditions in the domain  $[-5,5]\times[-5,5]\times[0,10]$ and solve the closed-loop system with \texttt{ode15s} solver in \textbf{MATLAB}. In Fig.~\ref{fig:lorenz}a , we show the open loop and closed loop trajectories starting from five different initial conditions and converging to the critical point.

\[\]

\begin{figure}[!htp]
\begin{framed}
\centering
\subfigure[]{\label{noerror}\includegraphics[width=1.9in]{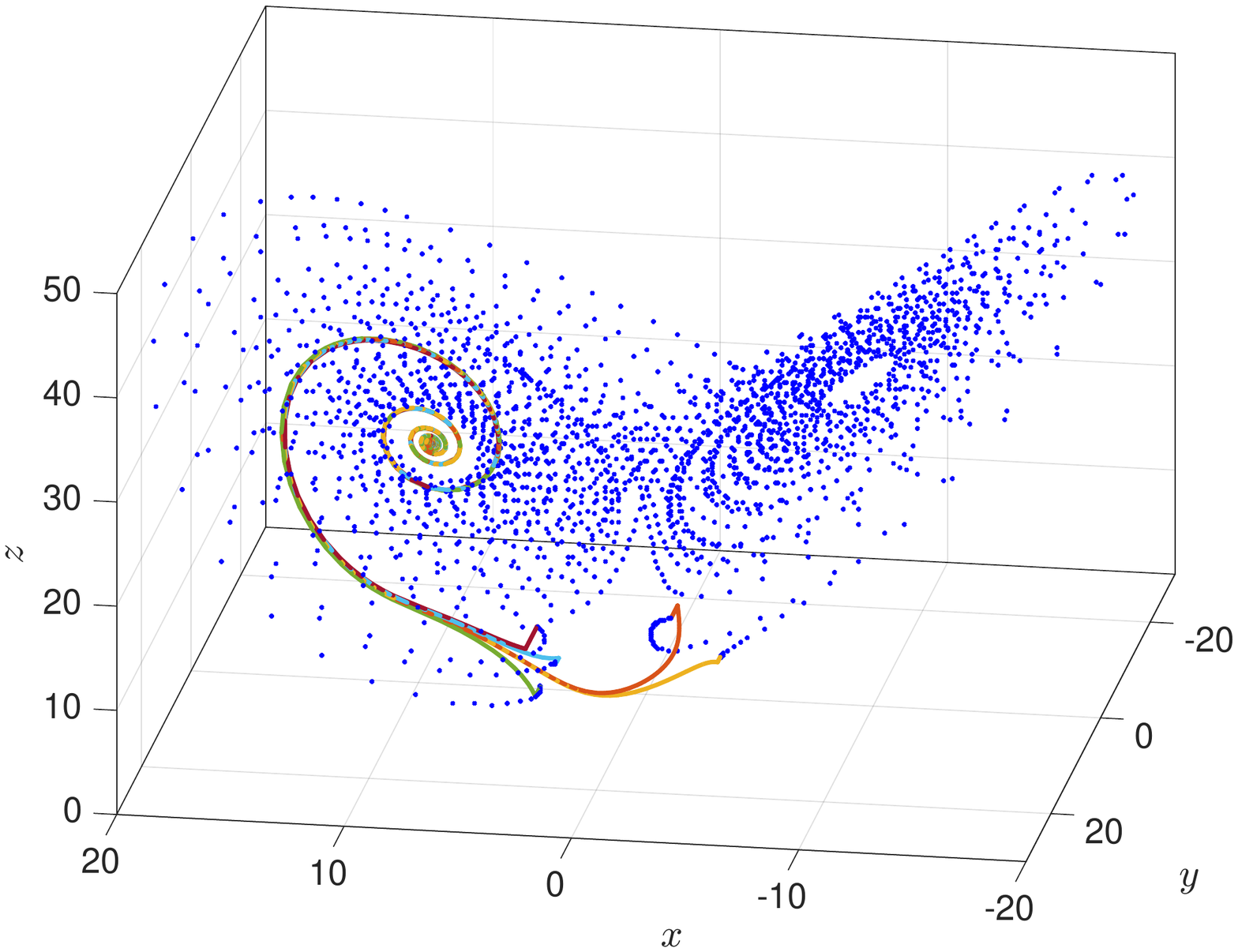}}\qquad
\subfigure[]{\label{synerror}\includegraphics[width=1.9in]{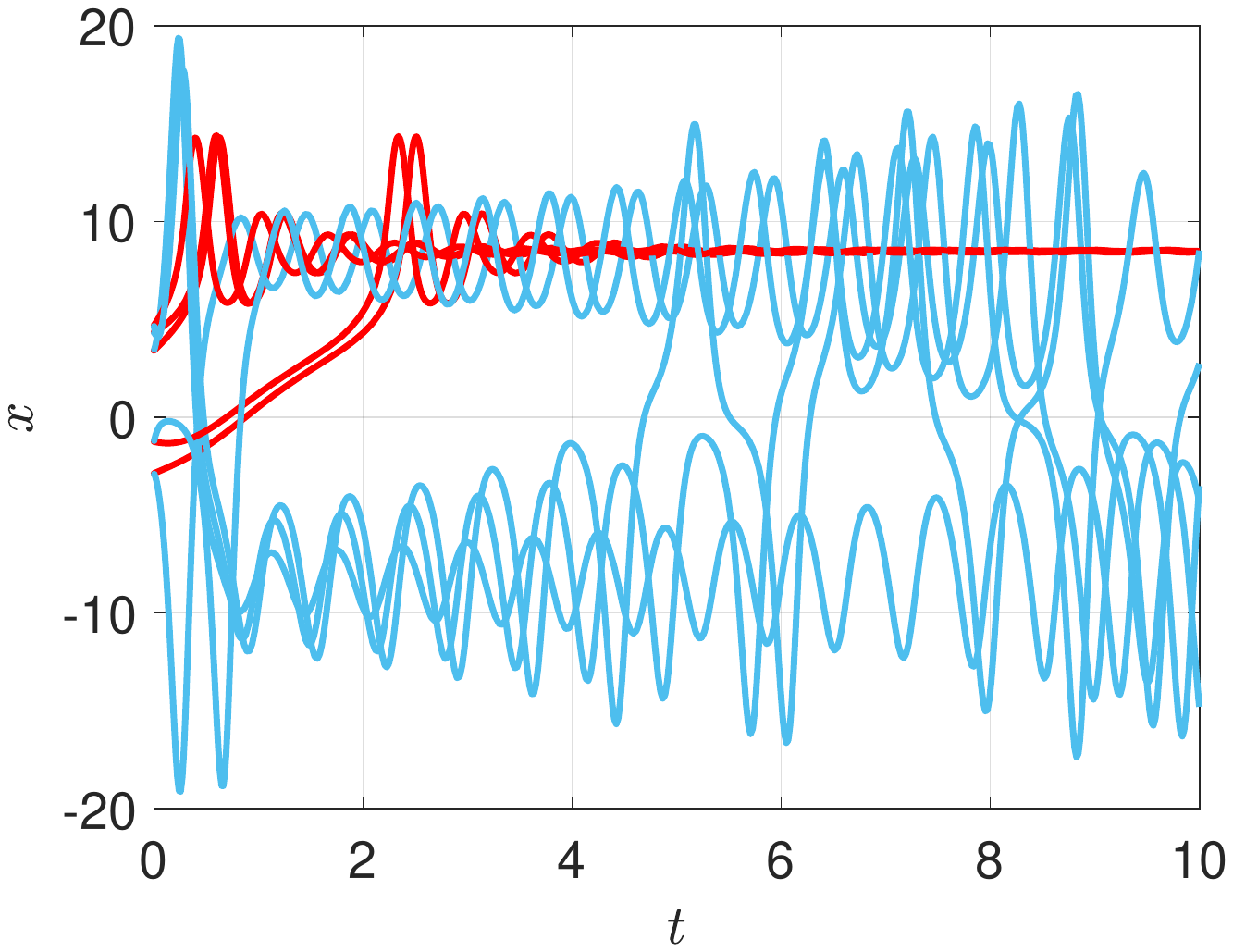}}

\subfigure[]{\label{deadlock}\includegraphics[width=1.9in]{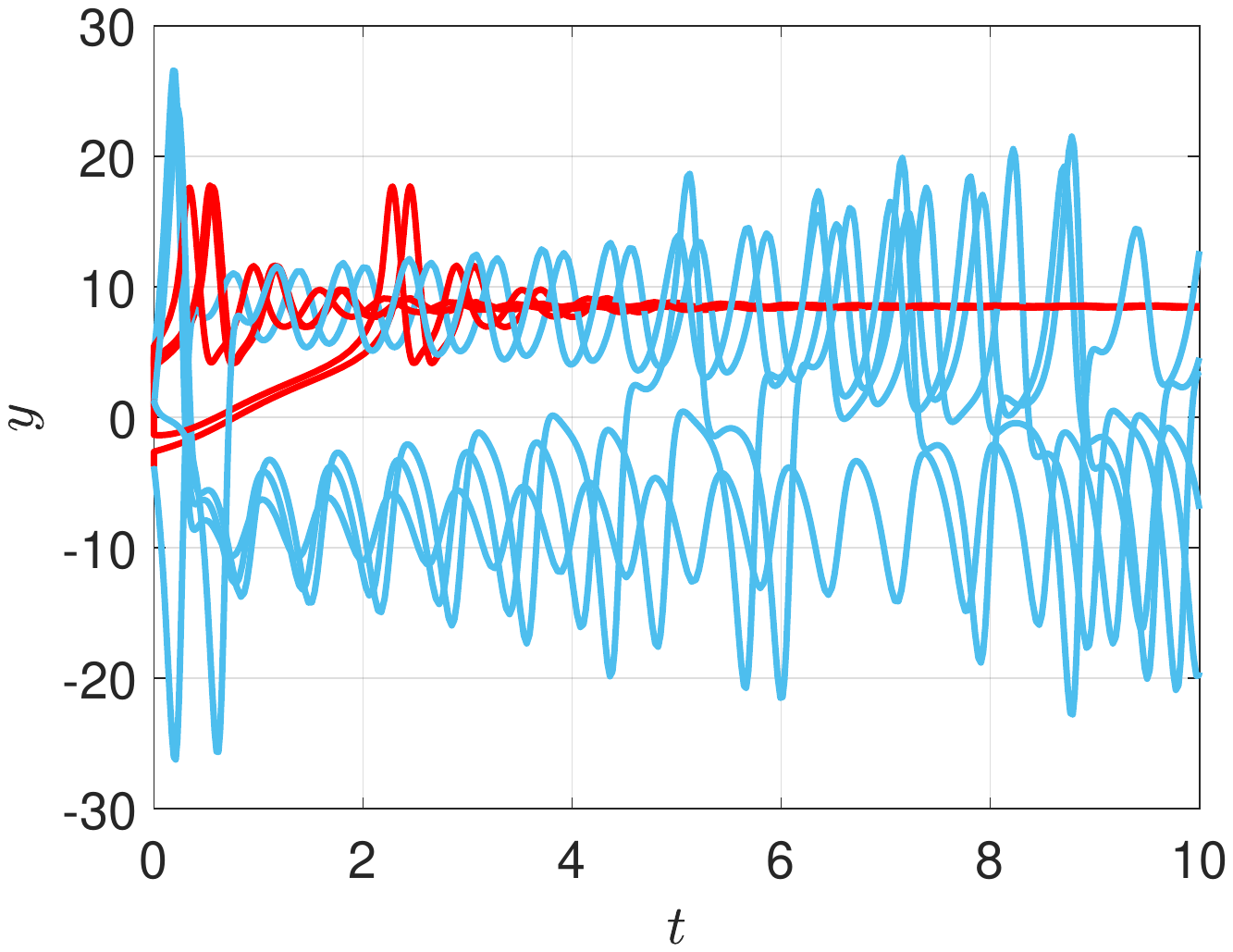}}\qquad
\subfigure[]{\label{lacksync}\includegraphics[width=1.9in]{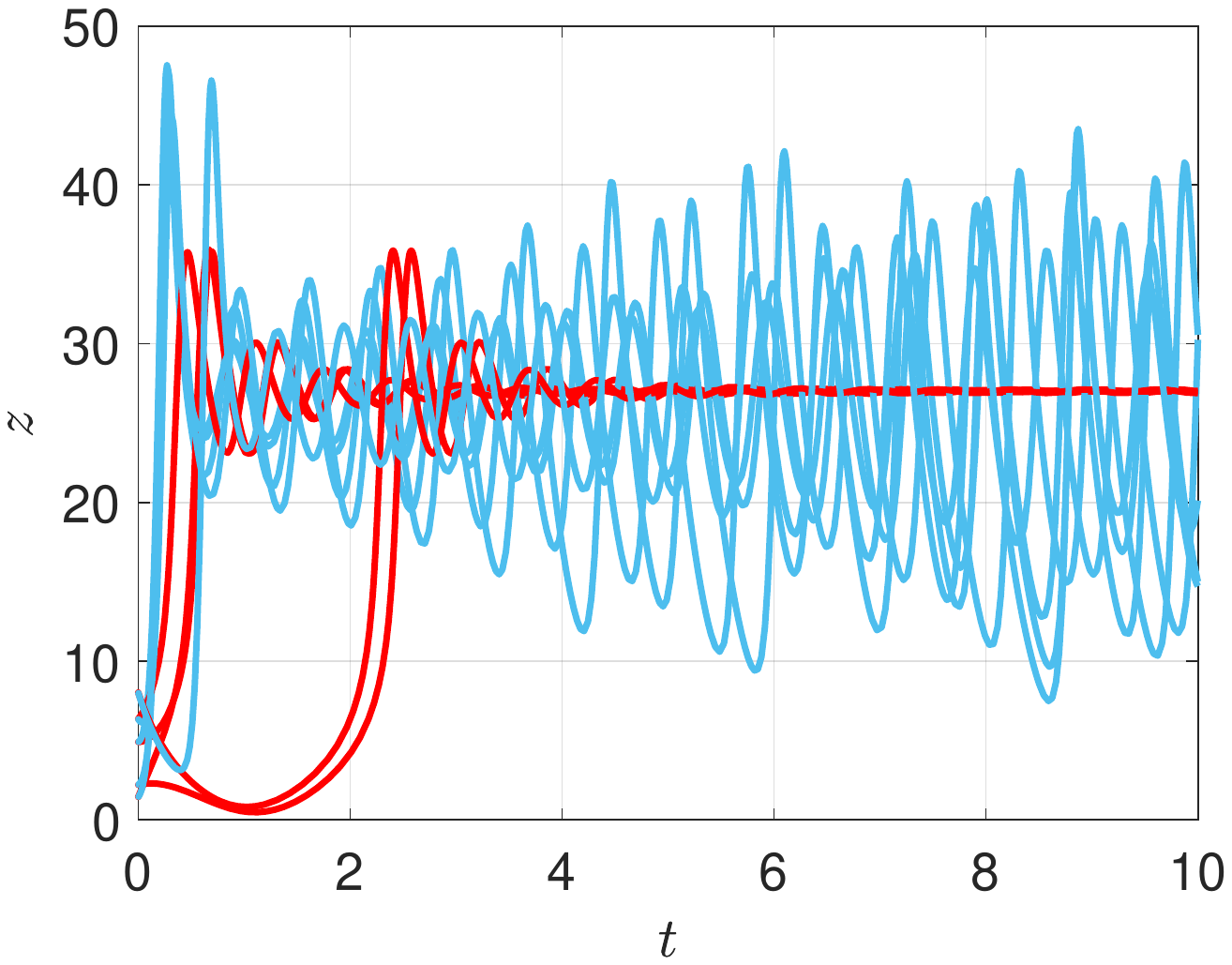}}
\caption{Feedback Stabilization of Lorentz system. a) Comparison of open loop and closed loop trajectories in state space; b) $x(t)\; vs $ time, open loop (blue) and closed loop (red); c) $y(t)\; vs $ time, open loop (blue) and closed loop (red); d) $z(t)\; vs $ time, open loop (blue) and closed loop (red).}
\label{fig:lorenz}
\end{framed}
\end{figure}

\noindent{\bf Example 3: IEEE 9 bus Power System}

\noindent In the last example, we consider the IEEE 9 bus system, the line diagram of which is shown in Fig.~\ref{fig:9bus}a. The model we are using is based on the modified 9 bus test system in ~\cite{Sauer_pai_book}. The system consists of 3 synchronous machines(generators) with IEEE type-I exciters, loads and transmission lines.
The synthetic data is generated using PST (Power System Toolbox) in MATLAB~\cite{207380}, the 9 bus power system network can be described by a set of differential algebraic equations (DAE), consider a power system model with $n_g$ generator buses and $n_l$ load buses. the closed-loop generator dynamics for the $i$th generator bus can be represented as a $2^{nd}$ order dynamical model with the control $u$:
\begin{eqnarray}\label{generator_dynamic_eq}
\begin{aligned}
&\frac{d\delta_i}{dt}  = \omega_i - \omega_s \\
&\frac{d\omega_i}{dt}  = \frac{1}{M_i}\left(P_{m_i}-\sum_{j\in {\cal N}_i}\frac{E_i E_j}{X_{ij}}\sin(\delta_i-\delta_j)- D_i (\omega_i-\omega_s)\right)+u_i \\
\end{aligned}
\end{eqnarray}
where $\delta_i$, $\omega_i$ are the dynamic states of the generator and correspond to the generator rotor angle, the angular velocity of the rotor. The values for the other parameters is chosen as follows: $\omega_s=1$, the generator mass $M_i = 23.64,\; 6.4,\;3.1$, the internal damping $D_i = 0.05,\;0.95,\;0.05$, the generator power $P_{m_i}= 0.719,\;1.63,\;0.85$ for $i=1,2,3$. The values of $X_{ij}$ are taken from the PST in MATLAB.


%


For the approximation of Koopman operator and eigenfunctions, the time-series data are generated from 100 initial conditions. Each initial condition are propagated for $T_{final}=10s$ and $\Delta t=0.01s$. The dictionary function $H(x)$ in this example are chosen as 84 monomials of most degree $D=3$. The data-driven stabilizing control is designed using modified Sontag's formula control in Eq. (\ref{mod_sontag}), where $q(x) = 10x^\top x$. The simulation results for this example are shown in Fig. \ref{fig:9bus}. We notice that the open loop system is marginally stable with sustained oscillations. The objective of the stabilizing controller is to stabilize to frequencies to $\omega_s=1$ and the point for the stabilization of $\delta$ dynamics is determined by $P_{m_i}$. Simulation results show that the data-driven stabilizing controller is successful in stabilizing the power system dynamics.


\begin{figure}[!htp]
\begin{framed}
\centering
\subfigure[]{\label{noerror}\includegraphics[width=2.2in]{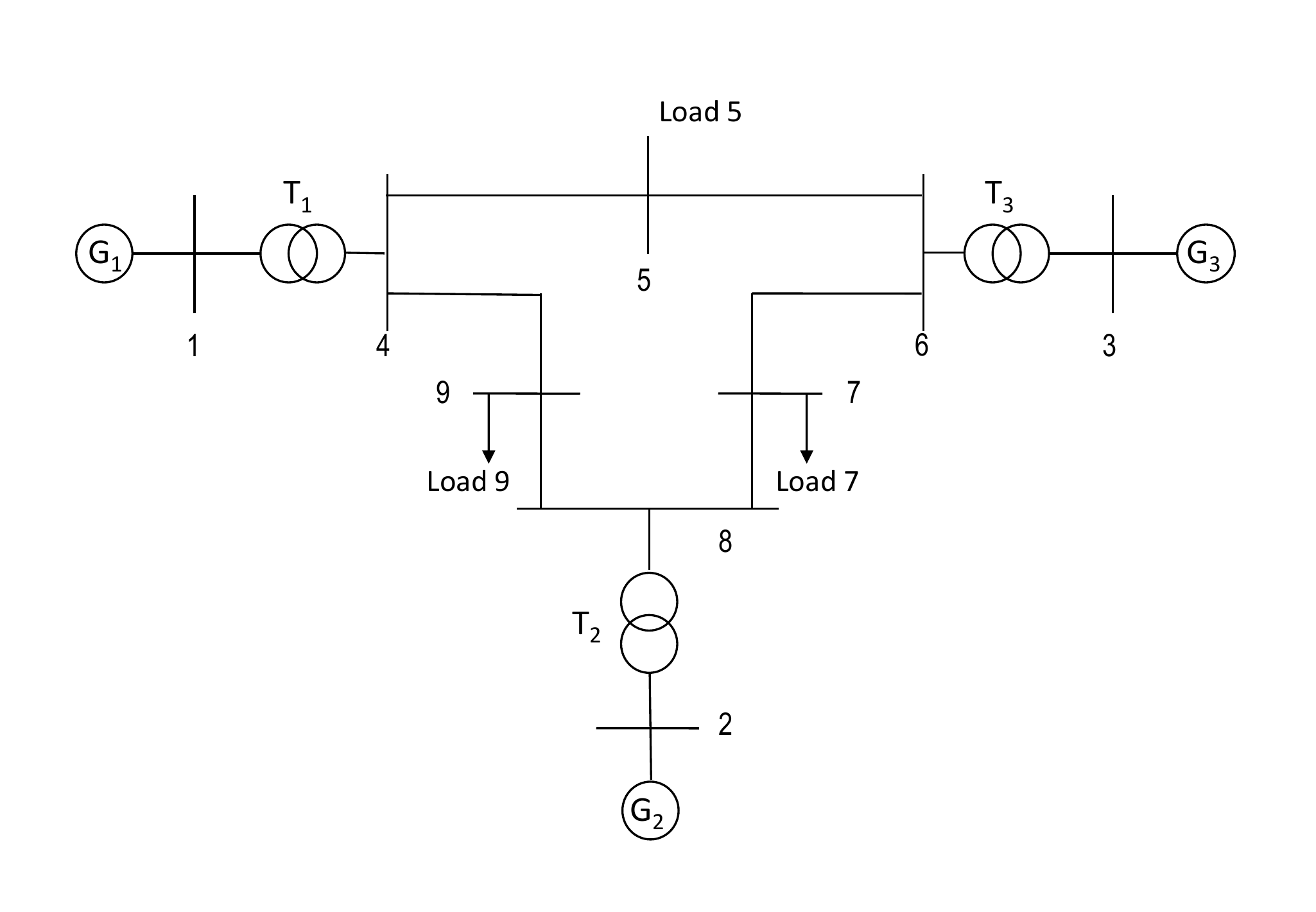}}\qquad
\subfigure[]{\label{synerror}\includegraphics[width=1.9in]{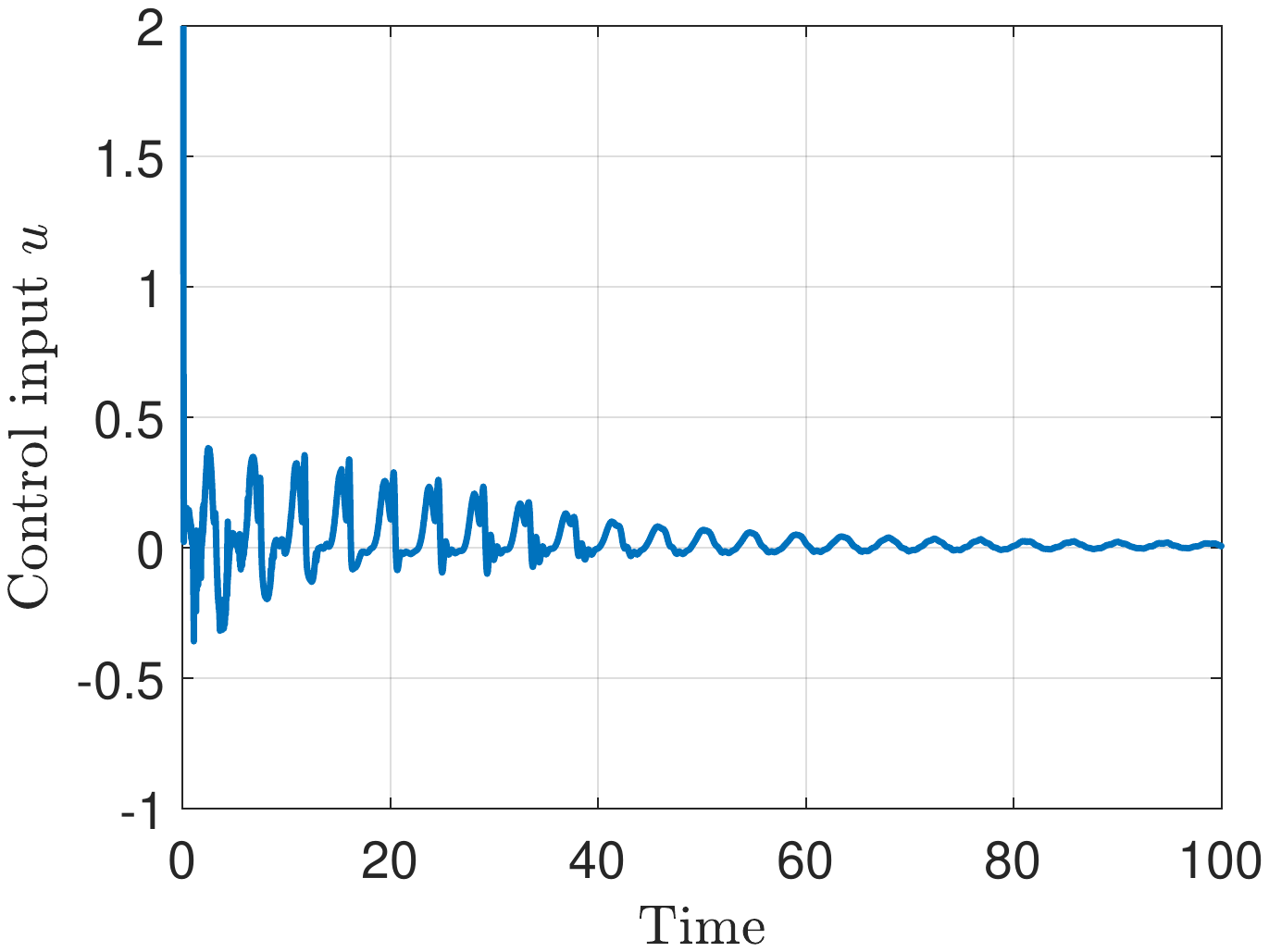}}

\subfigure[]{\label{deadlock}\includegraphics[width=1.9in]{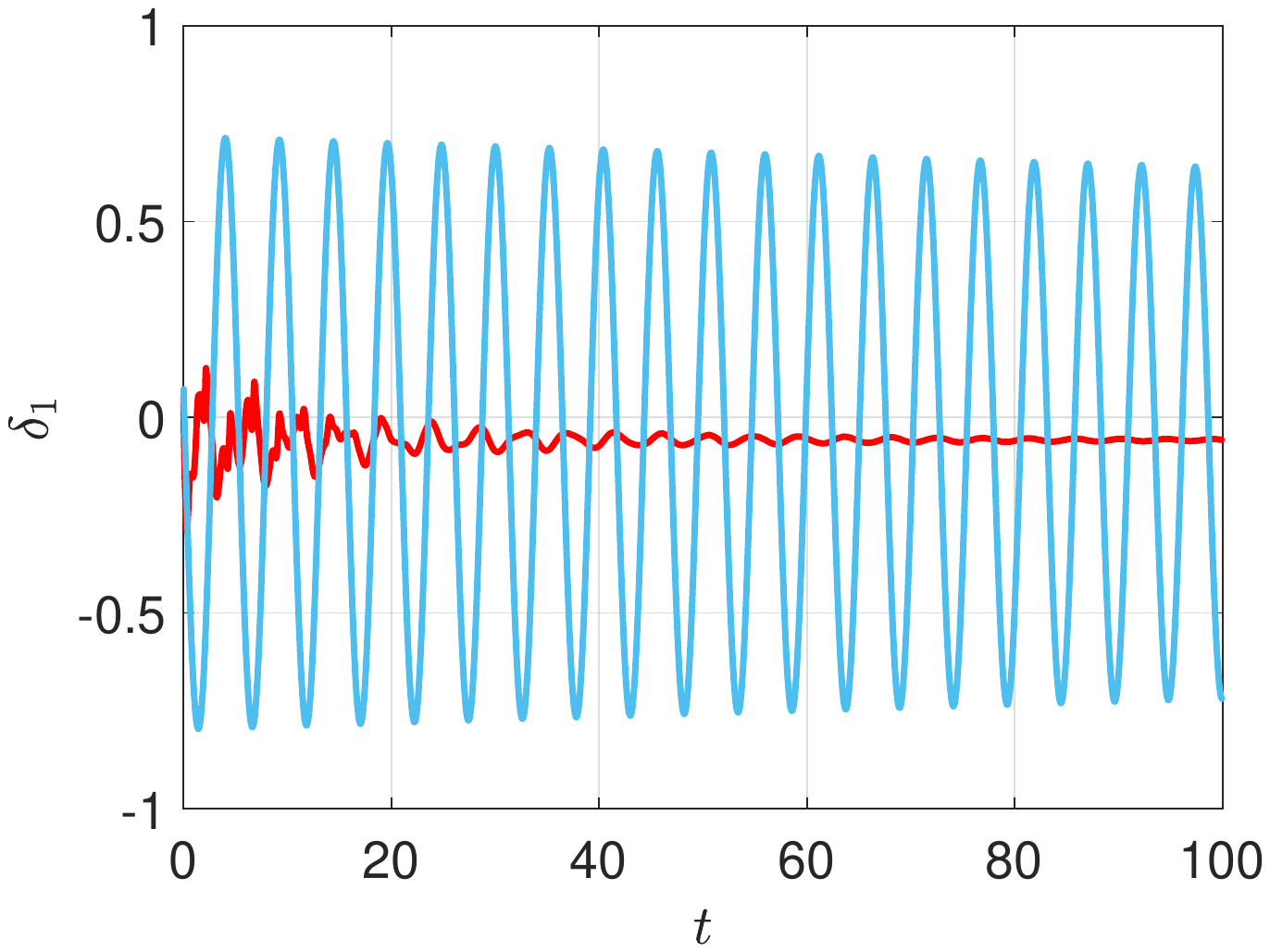}}\qquad
\subfigure[]{\label{lacksync}\includegraphics[width=1.9in]{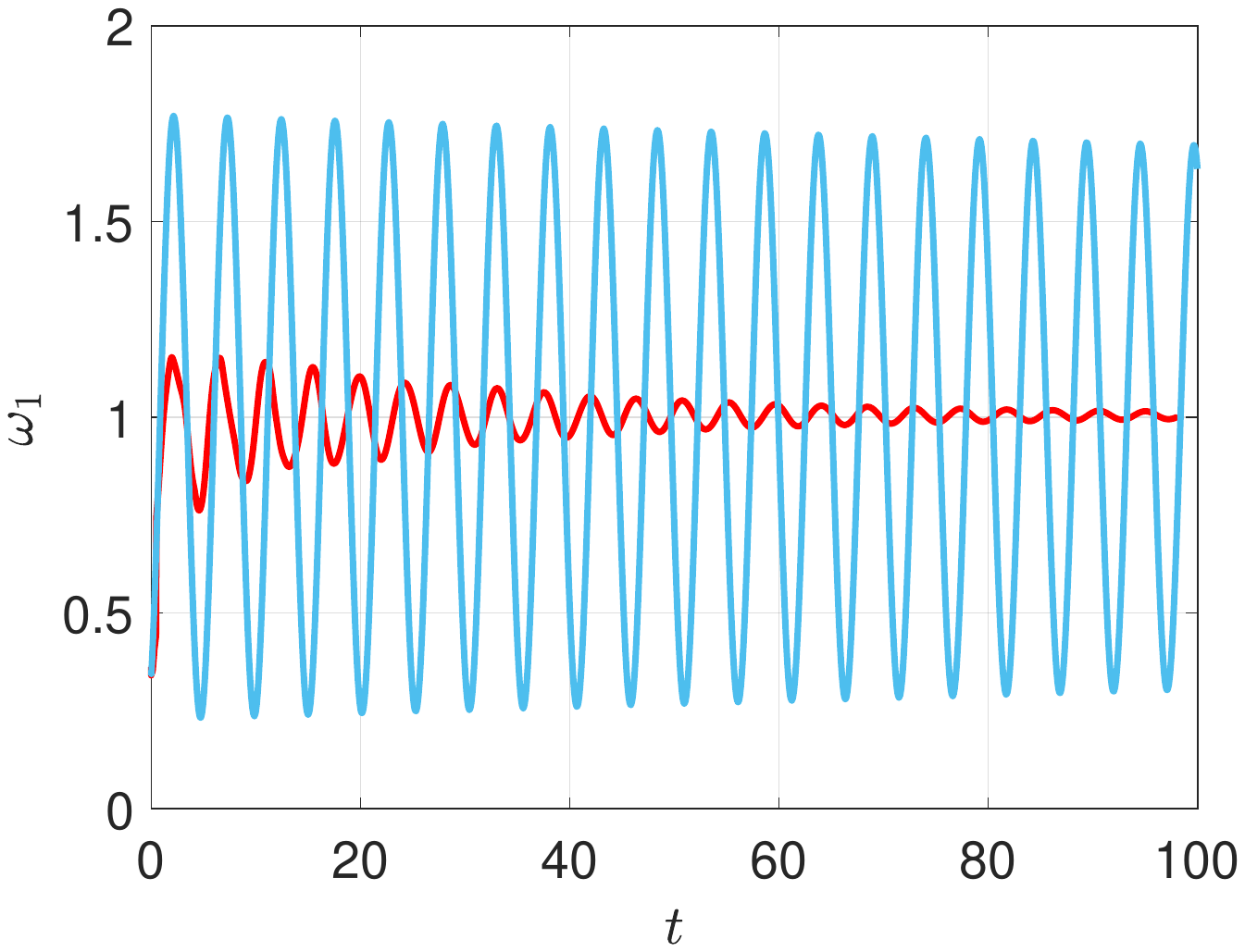}}
\caption{Stabilization of IEEE nine bus system. a) Line diagram for IEEE nine bus system; b) Control value vs time; c) Comparison of open loop and closed loop trajectory for phase angle $\delta_1(t)$ of generator 1; d) Comparison of open loop and closed loop trajectory for frequency $\omega_1(t)$ of generator 1.
}\label{fig:9bus}
\end{framed}
\end{figure}


\section{Conclusion}\label{section_conclusion}
In this chapter, we provided a systematic approach for the data-driven feedback stabilization of nonlinear control systems. A data-driven approach is proposed for the identification of nonlinear control system and control Lyapunov function-based stabilizing feedback controller. The bilinear structure of the control system in Koopman eigenfunction coordinate is exploited to provide a convex optimization-based approach for the search of control Lyapunov function. Simulation results are presented to verify the applicability of the developed framework. 
\bibliographystyle{spmpsci.bst}
\bibliography{ref,ref1}

\begin{thebibliography}{10}
\providecommand{\url}[1]{{#1}}
\providecommand{\urlprefix}{URL }
\expandafter\ifx\csname urlstyle\endcsname\relax
  \providecommand{\doi}[1]{DOI~\discretionary{}{}{}#1}\else
  \providecommand{\doi}{DOI~\discretionary{}{}{}\begingroup
  \urlstyle{rm}\Url}\fi

\bibitem{arbabi2018data}
Arbabi, H., Korda, M., Mezic, I.: A data-driven koopman model predictive
  control framework for nonlinear flows.
\newblock arXiv preprint arXiv:1804.05291  (2018)

\bibitem{artstein1983stabilization}
Artstein, Z.: Stabilization with relaxed controls.
\newblock Nonlinear Analysis: Theory, Methods \& Applications \textbf{7}(11),
  1163--1173 (1983)

\bibitem{astolfi2015feedback}
Astolfi, A.: Feedback stabilization of nonlinear systems.
\newblock Encyclopedia of Systems and Control pp. 437--447 (2015)

\bibitem{Boyd_book}
Boyd, S., Ghaoui, L.E., Feron, E., Balakrishnan, V.: Linear Matrix Inequalities
  in System and Control Theory.
\newblock SIAM (1994)

\bibitem{mezic_koopmanism}
Budisic, M., Mohr, R., Mezic, I.: Applied koopmanism.
\newblock Chaos \textbf{22}, 047,510--32 (2012)

\bibitem{207380}
Chow, J.H., Cheung, K.W.: A toolbox for power system dynamics and control
  engineering education and research.
\newblock IEEE Transactions on Power Systems \textbf{7}(4), 1559--1564 (1992).
\newblock \doi{10.1109/59.207380}

\bibitem{freeman1996control}
Freeman, R.A., Primbs, J.A.: Control lyapunov functions: New ideas from an old
  source.
\newblock In: Decision and Control, 1996., Proceedings of the 35th IEEE
  Conference on, vol.~4, pp. 3926--3931. IEEE (1996)

\bibitem{hanke2018koopman}
Hanke, S., Peitz, S., Wallscheid, O., Klus, S., B{\"o}cker, J., Dellnitz, M.:
  Koopman operator based finite-set model predictive control for electrical
  drives.
\newblock arXiv preprint arXiv:1804.00854  (2018)

\bibitem{SOS_book}
Henrion, D., Garulli, A. (eds.): Positive polynomials in control, \emph{Lecture
  Notes in Control and Information Sciences}, vol. 312.
\newblock Springer-Verlag, Berlin (2005)

\bibitem{sample_complexity}
Huang, B., Chen, Y., Vaidya, U.: Sample complexity of nonlinear control system.
\newblock Preprint  (2018)

\bibitem{bowen_koopmanstabilziationCDC}
Huang, B., Ma, X., Vaidya, U.: Feedback stabilization using koopman operator.
\newblock Proceedings of IEEE Control and Decision Conference, Miami FL  (2018)

\bibitem{Umesh_NSDMD}
Huang, B., Vaidya, U.: Data-driven approximation of transfer operators:
  Naturally structured dynamic mode decomposition.
\newblock In: {https://arxiv.org/abs/1709.06203} (2016)

\bibitem{kaiser2017data}
Kaiser, E., Kutz, J.N., Brunton, S.L.: Data-driven discovery of koopman
  eigenfunctions for control.
\newblock arXiv preprint arXiv:1707.01146  (2017)

\bibitem{Khalil_book}
Khalil, H.K.: Nonlinear Systems.
\newblock Prentice Hall, New Jersey (1996)

\bibitem{korda2018linear}
Korda, M., Mezi{\'c}, I.: Linear predictors for nonlinear dynamical systems:
  Koopman operator meets model predictive control.
\newblock Automatica \textbf{93}, 149--160 (2018)

\bibitem{korda2018power}
Korda, M., Susuki, Y., Mezi{\'c}, I.: Power grid transient stabilization using
  koopman model predictive control.
\newblock arXiv preprint arXiv:1803.10744  (2018)

\bibitem{Lasota}
Lasota, A., Mackey, M.C.: Chaos, Fractals, and Noise: Stochastic Aspects of
  Dynamics.
\newblock Springer-Verlag, New York (1994)

\bibitem{mauroy2016global}
Mauroy, A., Mezi{\'c}, I.: Global stability analysis using the eigenfunctions
  of the koopman operator.
\newblock IEEE Transactions on Automatic Control \textbf{61}(11), 3356--3369
  (2016)

\bibitem{Meic_model_reduction}
Mezi\'{c}, I.: Spectral properties of dynamical systems, model reductions and
  decompositions.
\newblock Nonlinear Dynamics  (2005)

\bibitem{Parrilothesis}
Parrilo, P.A.: Structured semidefinite programs and semialgebraic geometry
  methods in robustness and optimization.
\newblock Ph.D. thesis, California Institute of Technology, Pasadena, CA (2000)

\bibitem{peitz2017koopman}
Peitz, S., Klus, S.: Koopman operator-based model reduction for switched-system
  control of pdes.
\newblock arXiv preprint arXiv:1710.06759  (2017)

\bibitem{primbs1999nonlinear}
Primbs, J.A., Nevisti{\'c}, V., Doyle, J.C.: Nonlinear optimal control: A
  control lyapunov function and receding horizon perspective.
\newblock Asian Journal of Control \textbf{1}(1), 14--24 (1999)

\bibitem{raghunathan2014optimal}
Raghunathan, A., Vaidya, U.: Optimal stabilization using lyapunov measures.
\newblock IEEE Transactions on Automatic Control \textbf{59}(5), 1316--1321
  (2014)

\bibitem{rowley2009spectral}
Rowley, C.W., Mezi{\'c}, I., Bagheri, S., Schlatter, P., Henningson, D.S.:
  Spectral analysis of nonlinear flows.
\newblock Journal of fluid mechanics \textbf{641}, 115--127 (2009)

\bibitem{sastry2013nonlinear}
Sastry, S.: Nonlinear systems: analysis, stability, and control, vol.~10.
\newblock Springer Science \& Business Media (2013)

\bibitem{Sauer_pai_book}
Sauer, P.W., Pai, M.: Power system dynamics and stability.
\newblock Urbana \textbf{51}, 61,801 (1997)

\bibitem{DMD_schmitt}
Schmid, P.J.: Dynamic mode decomposition of numerical and experimental data.
\newblock Journal of Fluid Mechanics \textbf{656}, 5--28 (2010)

\bibitem{sontag1989universal}
Sontag, E.D.: A `universal' construction of \text{Artstein's} theorem on
  nonlinear stabilization.
\newblock Systems \& control letters \textbf{13}(2), 117--123 (1989)

\bibitem{sootla2018optimal}
Sootla, A., Mauroy, A., Ernst, D.: Optimal control formulation of pulse-based
  control using koopman operator.
\newblock Automatica \textbf{91}, 217--224 (2018)

\bibitem{surana2018koopman}
Surana, A.: Koopman operator framework for time series modeling and analysis.
\newblock Journal of Nonlinear Science pp. 1--34 (2018)

\bibitem{surana_observer}
Surana, A., Banaszuk, A.: Linear observer synthesis for nonlinear systems using
  koopman operator framework.
\newblock In: {Proceedings of IFAC Symposium on Nonlinear Control Systems}.
  Monterey, California (2016)

\bibitem{susuki2011nonlinear}
Susuki, Y., Mezic, I.: Nonlinear koopman modes and coherency identification of
  coupled swing dynamics.
\newblock IEEE Transactions on Power Systems \textbf{26}(4), 1894--1904 (2011)

\bibitem{Vaidya_CLM}
Vaidya, U., Mehta, P., Shanbhag, U.: Nonlinear stabilization via control
  {L}yapunov measure.
\newblock IEEE Transactions on Automatic Control \textbf{55}, 1314--1328 (2010)

\bibitem{VaidyaMehtaTAC}
Vaidya, U., Mehta, P.G.: Lyapunov measure for almost everywhere stability.
\newblock IEEE Transactions on Automatic Control \textbf{53}, 307--323 (2008)

\bibitem{EDMD_williams}
Williams, M.O., Kevrekidis, I.G., Rowley, C.W.: A data--driven approximation of
  the koopman operator: Extending dynamic mode decomposition.
\newblock Journal of Nonlinear Science \textbf{25}(6), 1307--1346 (2015)

\end{thebibliography}
\end{document}